\DeclareMathOperator\Lie{\textnormal{Lie}}
\DeclareMathOperator{\ad}{\textnormal{ad}}
\DeclareMathOperator{\Ad}{\textnormal{Ad}}
\DeclareMathOperator{\Ran}{\textnormal{Ran}}
\DeclareMathOperator{\spn}{\textnormal{span}}
\DeclareMathOperator{\Aut}{\textnormal{Aut}}
\DeclareMathOperator{\id}{\textnormal{id}}
\newcommand{\R}{\mathbb{R}}
\newcommand{\C}{\mathbb{C}}
\renewcommand{\H}{\mathcal{H}}
\newcommand{\Id}{\mathbbm 1}
\newcommand{\M}{\mathcal{M}}  %manifold
\newcommand{\N}{\mathcal{N}}  %this is the  vector bundle map, it looks fine
\newcommand{\I}{N}  %this is the linear map (not so happy with it but it is coherent with J and \J
\newcommand{\lgk}{\mathcal{A}(G,K)}  %admissible operators
\newcommand{\lcgk}{\mathcal{AC}(G,K)}  %admissible operators
\newcommand{\J}{\mathcal{J}}%almost complex structure 
\newcommand{\V}{J}  %linear map of the almost complex structure
\begin{document}
%\bibliographystyle{alpha}
% \bibliographystyle{grz2}
% \bibliographystyle{sn-mathphys-ay}
% \definecolor{forest}{RGB}{0,100,0}

\newtheorem{thmx}{Theorem}
\renewcommand{\thethmx}{\Alph{thmx}} % "letter-numbered" theorems

\newtheorem*{theorem*}{Theorem}
\newtheorem{theorem}{Theorem}
\newtheorem{corollary}[theorem]{Corollary}
\newtheorem{criterion}[theorem]{Criterion}
\newtheorem{lemma}[theorem]{Lemma}
\newtheorem{proposition}[theorem]{Proposition}

\theoremstyle{definition}
\newtheorem{definition}[theorem]{Definition}
\newtheorem{example}[theorem]{Example}
\newtheorem{notation}[theorem]{Notation}
\newtheorem{problem}[theorem]{Problem}
\newtheorem{remark}[theorem]{Remark}
\numberwithin{theorem}{section}
\numberwithin{equation}{section}

\title{Nijenhuis operators on Banach homogeneous spaces}

\author{Tomasz Goli\'nski}
\address{University of Bia{\l}ystok, Faculty of Mathematics\\
    Ciołkowskiego 1M, 15-245 Bia{\l}ystok, Poland}
\email{tomaszg@math.uwb.edu.pl}

\author{Gabriel Larotonda}
\address{Universidad de Buenos Aires and CONICET, Argentina}
\email{glaroton@dm.uba.ar}

\author{Alice Barbora Tumpach}
\address{Institut CNRS Pauli, Wien, Technische Universität Wien, and Laboratoire Paul Painlev\'e, Villeneuve d'Ascq}
\email{barbara.tumpach@math.cnrs.fr}

%\date{\today}
%\dedicatory{}

%\keywords{dd}
%\subjclass[2010]{Primary ; Secondary }
%\date{}
%\makeatother

%\abstract{
\begin{abstract}For a Banach--Lie group $G$ and an embedded Lie subgroup $K$ we consider the homogeneous Banach manifold $\M=G/K$. In this context we establish the most general conditions for a bounded operator $\I$ acting on $Lie(G)$ to define a homogeneous vector bundle map $\N:T\M\to T\M$. In particular our considerations extend all previous settings on the matter and are well-suited for the case where $Lie(K)$ is not complemented in 
 $Lie(G)$. We show that the vanishing of the Nijenhuis torsion for a homogeneous vector bundle map $\N:T\M\to T\M$ (defined by an admissible bounded operator $\I$ on $Lie(G)$) is equivalent to the Nijenhuis torsion of $\I$ having values in $Lie(K)$. As an application, we consider the question of integrability of an almost complex structure $\J$ on $\M$ induced by an admissible bounded operator $\V$, and we give a simple characterization of integrability in terms of certain subspaces of the complexification of $Lie(G)$ (which are not eigenspaces of the complex extension of $\V$).
\end{abstract}
%}

\keywords{Nijenhuis operator, homogeneous space, almost complex manifold, Banach--Lie group}
%\pacs[MSC 2020 Classification]{58B12, 53C30, 32Q60, 58B20, 53C15}
\subjclass[2020]{58B12, 53C30, 32Q60, 58B20, 53C15}
%updated MSC2020: 32Q60 almost complex manifolds
%58B12: holomorphy in infinite dimensional manifolds
%58B20: Riemannian, Finsler and other geometric structures on infinite-dimensional manifolds
% 53C15: global diff geom - general geometric structures (almost complex etc)
%53C30: diff geom of homogeneous manifolds

\maketitle

\tableofcontents

\section{Introduction}

For a smooth vector bundle map $\N:T\M\to T\M$ (covering identity), its \emph{Nijenhuis torsion} is defined as
\[
\Omega_{\N}(X,Y)=\N [\N X,Y]+\N [X,\N Y]-[\N X,\N Y]-\N^{\; 2}[X,Y]
\]
in terms of vector fields $X,Y$ in $\M$. Here $[\cdot, \cdot ]$ is the usual Lie bracket of vector fields. Sometimes $\Omega_\N$ is called the \textit{Nijenhuis tensor} of $\N$ in the literature. It was defined in \cite{nijenhuis} in order to describe the behaviour of distributions spanned by eigenvectors of $\N$, see \cite{kosmann-bial} for a review of the history around this subject. The Nijenhuis torsion is closely related to the problem of integrability of almost complex structures solved in the finite-dimensional real-analytic case by Eckmann and Fr\"olicher in \cite{eckmann51,frolicher55} and for the smooth (or even less regular case) by Newlander and Nirenberg in \cite{newlander}.

The map $\N$ is a \emph{Nijenhuis operator} if its Nijenhuis torsion vanishes. Nijenhuis operators are useful in the study of integrable systems, see e.g. \cite{magri84,magri90,kosmann-bial,bols-bor,panasyuk2006}  and the references therein. For example the vanishing of the torsion $\Omega_{\N}$ is also equivalent to the Jacobi identity for a new \emph{deformed} bracket on vector fields of $\M$ defined as follows
\[
[X,Y]_{\N}=[\N X,Y] +[X,\N Y]-\N [X,Y]
\]
(see \cite{magri90,konyaev-n2}), and in fact $\N$ is a Lie algebra morphism from the new Lie algebra structure to the old one. It allows one also to deform a Poisson brackets on the manifold via so called Poisson--Nijenhuis structures \cite{magri90}. They are also linked with Poisson--Lie groups and even Poisson groupoids and Lie bialgebroids \cite{das19}.

There is a recent growing interest in Nijenhuis operators and their applications, as can be seen in the series of recent papers, for instance \cite{bolsinov-n1,bolsinov-na5} or \cite{grabowski2001} and references therein.

\smallskip

In a Banach homogeneous manifold $\M=G/K$, where $G$ is Banach--Lie group and $K$ an embedded Lie subgroup, a vector bundle map $\N:T\M\to  T\M$ is \emph{homogeneous} if it is invariant with respect to the natural action of the Lie group $G$ on $\M$. In this paper we are interested in homogeneous vector bundle maps that can be described by operators $\I\in \mathcal B(\mathfrak g)$ with certain properties (\textit{admissible operators} of Definition \ref{homj}), where $\mathfrak g=\Lie(G)$ is the Banach--Lie algebra of $G$. The main purpose of this paper is to prove the following 

\begin{thmx}
Let $\N:T\M\to  T\M$ be a homogeneous vector bundle map induced by an admissible operator $\I\in \mathcal B(\mathfrak g)$ and the action of $G$ in $\M$. Then $\N$ is Nijenhuis if and only if
\[
\I[v,\I w]+\I[\I v,w]-[\I v,\I w]-\I^2[v,w]\in \mathfrak k = \Lie(K)
\]
for all $v,w\in \mathfrak g$.
\end{thmx}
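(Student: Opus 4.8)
The plan is to reduce the vanishing of $\Omega_\N$ to a single algebraic identity at the base point $o=eK\in\M$, evaluated on fundamental vector fields. Write $\pi\colon G\to\M$ for the quotient map and $q=d\pi_e\colon\mathfrak g\to T_o\M$, a bounded surjection with $\ker q=\mathfrak k$; for $v\in\mathfrak g$ let $v^{*}$ be the fundamental vector field of the $G$-action, so that $v^{*}_o=q(v)$. The construction of $\N$ from an admissible $\I$ (Definition~\ref{homj}) is exactly what makes $\N$ a well-defined homogeneous bundle map and supplies the descent relation $\N_o\,q(v)=q(\I v)$. Since $\Omega_\N$ is a $(1,2)$-tensor (the non-tensorial parts of $\N[\N X,Y]+\N[X,\N Y]-[\N X,\N Y]-\N^{2}[X,Y]$ cancel, just as in finite dimensions) and is $G$-invariant (because $\N$ is $G$-invariant and the bracket of vector fields is natural under diffeomorphisms), it vanishes identically if and only if it vanishes at $o$; and as the vectors $v^{*}_o=q(v)$ exhaust $T_o\M$, this happens if and only if $\Omega_\N(v^{*},w^{*})_o=0$ for all $v,w$. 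The whole theorem then follows once we prove the identity
\[
\Omega_\N(v^{*},w^{*})_o=q\bigl(\I[v,\I w]+\I[\I v,w]-[\I v,\I w]-\I^{2}[v,w]\bigr),
\]
since its right-hand side lies in $\ker q=\mathfrak k$ precisely when the bracket expression does.

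To compute the three brackets occurring in $\Omega_\N(v^{*},w^{*})_o$ I would work on $G$ and push forward by $\pi$. Homogeneity of $\N$ gives $(\N v^{*})_{g\cdot o}=dL_g\,q\bigl(\I\,\Ad(g^{-1})v\bigr)$, and rewriting $dL_g\circ q=d\pi_g\circ(L_g)_{*}$ (from $\pi\circ L_g=L_g\circ\pi$) shows that $\N v^{*}$ is $\pi$-related to the vector field on $G$ whose left trivialization is the smooth map $g\mapsto \I\,\Ad(g^{-1})v$; likewise $v^{*}$ is $\pi$-related to $g\mapsto\Ad(g^{-1})v$. Because $\pi$-relatedness is preserved under brackets, each bracket at $o$ is $q$ applied to the value at $e$ of the corresponding left-trivialized bracket on $G$, which is given by the standard formula $[A,B]\leftrightarrow (A\cdot b)-(B\cdot a)+[a,b]$ for trivializing maps $a,b\colon G\to\mathfrak g$ (valid in the Banach setting, all maps involved being smooth), with the directional derivatives evaluated via $\tfrac{d}{dt}\big|_0\Ad(\exp(-t\xi))\eta=-[\xi,\eta]$. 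Evaluating at $e$ this yields
\[
[\N v^{*},w^{*}]_o=-q(\I[v,w]),\qquad [v^{*},\N w^{*}]_o=-q(\I[v,w]),\qquad [v^{*},w^{*}]_o=-q([v,w]),
\]
\[
[\N v^{*},\N w^{*}]_o=q\bigl([\I v,\I w]-\I[\I v,w]-\I[v,\I w]\bigr).
\]

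Applying $\N$ to the first two values (using $\N_o\,q=q\,\I$) and substituting all four into $\Omega_\N(v^{*},w^{*})_o=\N[\N v^{*},w^{*}]_o+\N[v^{*},\N w^{*}]_o-[\N v^{*},\N w^{*}]_o-\N^{2}[v^{*},w^{*}]_o$, the $\I^{2}[v,w]$-contributions collapse to a single $-q(\I^{2}[v,w])$ and the remaining terms assemble into the claimed right-hand side. The forward implication needs only this identity; the converse additionally invokes the tensoriality and $G$-invariance of $\Omega_\N$ from the first paragraph to pass from vanishing on fundamental fields at $o$ to vanishing on all vector fields everywhere.

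The one genuinely delicate point is the term $[\N v^{*},\N w^{*}]_o$: unlike $v^{*}$, the field $\N v^{*}$ is not a fundamental vector field, as $\I$ need not commute with $\Ad(g)$ for $g\notin K$, so its bracket cannot be read off directly from the bracket of $\mathfrak g$. Passing to the left trivialization on $G$ is what tames this: the representative $g\mapsto\I\,\Ad(g^{-1})v$ is smooth, and exactly its directional derivatives produce the $\I[\I v,w]$ and $\I[v,\I w]$ terms. I would emphasize that this argument never requires a complement of $\mathfrak k$ in $\mathfrak g$: every identification passes through the surjection $q=d\pi_e$ and the quotient $T_o\M\cong\mathfrak g/\mathfrak k$, which is precisely why the method remains valid in the non-complemented setting.
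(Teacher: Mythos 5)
Your proposal is correct, and its engine is the same as the paper's proof of Theorem \ref{torsionJ}: you identify $\N v^{*}$ as $\pi$-related to the vector field on $G$ with left trivialization $g\mapsto \I\Ad_g^{-1}v$ (the paper's $X^{v,\I}$ feeding into Lemma \ref{nihenG}), compute the three brackets upstairs, and push down by $q=\pi_{*1}$; your bracket values $[\N v^{*},w^{*}]_o=-q(\I[v,w])$, $[v^{*},w^{*}]_o=-q([v,w])$ and $[\N v^{*},\N w^{*}]_o=q([\I v,\I w]-\I[\I v,w]-\I[v,\I w])$ agree with the paper's, and you correctly isolate the genuinely delicate point that $\N v^{*}$ is not itself a fundamental field because $\I$ need not commute with $\Ad_g$ off $K$ (the paper's Remark \ref{notrel}). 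There are two real divergences worth noting. First, you reduce to the base point via tensoriality plus $G$-equivariance of $\Omega_{\N}$ (valid: $\N$ commutes with $(\alpha_g)_*$ and brackets are natural under diffeomorphisms), whereas the paper never invokes equivariance of the torsion and instead computes it directly at every $p=\pi(g)$ using projected fields with speeds $\Ad_g v$, $\Ad_g w$; your route is more economical, while the paper's yields the explicit global formula $\Omega_{\N}(X,Y)(p)=(\alpha_g)_*\pi_{*1}\big(\I[v,\I w]+\I[\I v,w]-[\I v,\I w]-\I^{2}[v,w]\big)$, which is slightly more information than the bare vanishing criterion. Second, you evaluate the brackets by the standard left-trivialization formula $[A,B]\leftrightarrow D_A b-D_B a+[a,b]$, while the paper derives the same values in the exponential chart via the expansions of $F(\ad_z)$ and $f(\ad_z)$; these are equivalent, but the trivialization formula is itself usually proved by exactly such a chart computation, so your argument implicitly leans on what the paper carries out explicitly --- the same remark applies to your one-line assertion that the non-tensorial terms of $\Omega_{\N}$ cancel, which the paper proves as Theorem \ref{onlyp}; both facts do hold in the Banach setting, so this is a matter of self-containedness rather than a gap. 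One cosmetic slip: you overload $L_g$/$dL_g$ for both left translation on $G$ and the action $\alpha_g$ on $\M$; the identity you actually use is the paper's Lemma \ref{derive}(1), $(\alpha_g)_*\pi_{*1}=\pi_{*g}L_g$.
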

This is Theorem \ref{torsionJ} below; in the process, we hope to clarify certain aspects of known proofs of related results. As a corollary of Theorem \ref{torsionJ}, for homogeneous almost complex structures $\J$, we give a linear characterization in the complexification of $\mathfrak g$ for $\J$  to be integrable, 
 invoking the Banach version of the Newlander--Nirenberg theorem \cite[Theorem 7]{beltita05integrability} and our previous theorem. More precisely, let $\mathfrak g^{\mathbb C}$ be the complexification of $\mathfrak g$, and $J^{\mathbb C}$ the complexification of $J$ ($J$ is an admissible operator in $\mathfrak g$ inducing the homogeneous almost complex structure $\J$ in $G/K$, see Definition \ref{admJ}). Let 
\[
Z_+= \{v\in \mathfrak g^{\mathbb C}: J^{\mathbb C}v-iv\in \mathfrak k^{\mathbb C}\}.
\]
\begin{thmx}
The almost complex structure $\J$ is integrable if and only if $Z_+$ is a Banach-Lie subalgebra of $\mathfrak g^{\mathbb C}$.
\end{thmx}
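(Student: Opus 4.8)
The plan is to combine the Banach Newlander--Nirenberg theorem with Theorem \ref{torsionJ} to reduce the statement to a purely algebraic equivalence in $\mathfrak g^{\mathbb{C}}$. By \cite[Theorem 7]{beltita05integrability} the almost complex structure $\J$ is integrable precisely when its Nijenhuis torsion vanishes, and by Theorem \ref{torsionJ} this happens exactly when the operator torsion
\[
T(v,w):=J[v,Jw]+J[Jv,w]-[Jv,Jw]-J^2[v,w]
\]
takes values in $\mathfrak k$ for all $v,w\in\mathfrak g$. Extending $T$ to a $\mathbb{C}$-bilinear map $T^{\mathbb{C}}$ on $\mathfrak g^{\mathbb{C}}$ (via $J^{\mathbb{C}}$ and the complexified bracket), this is equivalent to $T^{\mathbb{C}}(v,w)\in\mathfrak k^{\mathbb{C}}$ for all $v,w\in\mathfrak g^{\mathbb{C}}$. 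So everything reduces to showing that this last condition holds if and only if $Z_+$ is a Lie subalgebra. I record at the outset that $Z_+$ is a closed subspace, being the preimage of $0$ under the bounded map $v\mapsto q(J^{\mathbb{C}}v-iv)$ with $q\colon\mathfrak g^{\mathbb{C}}\to\mathfrak g^{\mathbb{C}}/\mathfrak k^{\mathbb{C}}$ the quotient; that $\mathfrak k^{\mathbb{C}}\subseteq Z_+$ since $J$ preserves $\mathfrak k$; that $\overline{Z_+}=Z_-:=\{v:J^{\mathbb{C}}v+iv\in\mathfrak k^{\mathbb{C}}\}$; and that $Z_++Z_-=\mathfrak g^{\mathbb{C}}$ with $Z_+\cap Z_-=\mathfrak k^{\mathbb{C}}$, because $J^{\mathbb{C}}$ descends to a genuine complex structure on $\mathfrak g^{\mathbb{C}}/\mathfrak k^{\mathbb{C}}$ (here I use that $J^2+\Id$ takes values in $\mathfrak k$, part of Definition \ref{admJ}).

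The core computation is as follows. Given $v,w\in Z_+$, write $J^{\mathbb{C}}v=iv+a$ and $J^{\mathbb{C}}w=iw+b$ with $a,b\in\mathfrak k^{\mathbb{C}}$, substitute into $T^{\mathbb{C}}(v,w)$, and collect terms using that $(J^{\mathbb{C}})^2+\Id$ takes values in $\mathfrak k^{\mathbb{C}}$. The purely bracket part assembles into $2i(J^{\mathbb{C}}-i)[v,w]$, while the remaining terms are $[a,b]$, one value of $(J^{\mathbb{C}})^2+\Id$, and the two cross terms $(J^{\mathbb{C}}-i)[v,b]$ and $(J^{\mathbb{C}}-i)[a,w]$. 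The first two lie in $\mathfrak k^{\mathbb{C}}$ at once. For the cross terms I would first establish, from the admissibility of $J$ (which yields $\ad_k J-J\ad_k$ with values in $\mathfrak k$ for $k\in\mathfrak k$, the infinitesimal form of $K$-invariance of $\J$), the inclusion $[\mathfrak k^{\mathbb{C}},Z_\pm]\subseteq Z_\pm$; since $a,b\in\mathfrak k^{\mathbb{C}}$ this forces $[v,b],[a,w]\in Z_+$, so both cross terms lie in $\mathfrak k^{\mathbb{C}}$. This gives the key congruence
\[
T^{\mathbb{C}}(v,w)\equiv 2i\,(J^{\mathbb{C}}-i)[v,w]\pmod{\mathfrak k^{\mathbb{C}}}\qquad(v,w\in Z_+),
\]
whence $T^{\mathbb{C}}(v,w)\in\mathfrak k^{\mathbb{C}}$ is equivalent to $[v,w]\in Z_+$. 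In particular, if $T^{\mathbb{C}}$ has values in $\mathfrak k^{\mathbb{C}}$ then $Z_+$ is closed under the bracket, and being a closed subspace it is a Banach--Lie subalgebra.

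For the converse I would show that involutivity of $Z_+$ alone forces $T^{\mathbb{C}}$ into $\mathfrak k^{\mathbb{C}}$ everywhere. Repeating the expansion for $v\in Z_+$, $w\in Z_-$ and again invoking $[\mathfrak k^{\mathbb{C}},Z_\pm]\subseteq Z_\pm$, the $[v,w]$-terms now cancel and one finds $T^{\mathbb{C}}(v,w)\in\mathfrak k^{\mathbb{C}}$ unconditionally; by antisymmetry of $T^{\mathbb{C}}$ the same holds after interchanging $v,w$. Complex conjugation, using $\overline{[x,y]}=[\bar x,\bar y]$ and $\overline{Z_+}=Z_-$, shows $Z_-$ is a subalgebra whenever $Z_+$ is, and the conjugate of the key congruence then gives $T^{\mathbb{C}}(v,w)\in\mathfrak k^{\mathbb{C}}$ for $v,w\in Z_-$. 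Finally, writing an arbitrary pair $x,y\in\mathfrak g^{\mathbb{C}}$ as $x=x_++x_-$, $y=y_++y_-$ with $x_\pm,y_\pm\in Z_\pm$ and expanding $T^{\mathbb{C}}(x,y)$ by $\mathbb{C}$-bilinearity into its four pieces, each of which lies in $\mathfrak k^{\mathbb{C}}$, yields $T^{\mathbb{C}}(x,y)\in\mathfrak k^{\mathbb{C}}$; restricting to real arguments recovers the condition of Theorem \ref{torsionJ}, hence integrability.

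The main obstacle is exactly that $\mathfrak k$ need not be an ideal, so brackets cannot be reduced modulo $\mathfrak k^{\mathbb{C}}$ for free; this is what prevents $Z_+$ from being an honest eigenspace of $J^{\mathbb{C}}$, as the statement warns. The inclusion $[\mathfrak k^{\mathbb{C}},Z_\pm]\subseteq Z_\pm$ is what tames the stray terms $a,b$, and deriving it carefully from the admissibility/homogeneity hypotheses is the step I expect to demand the most attention; the automatic vanishing modulo $\mathfrak k^{\mathbb{C}}$ of the mixed torsion on $Z_+\times Z_-$ is the second ingredient that lets involutivity of the single distribution $Z_+$ suffice.
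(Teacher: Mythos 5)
Your proof is correct, and its forward direction---substituting $\V^{\mathbb C}v=iv+a$, $\V^{\mathbb C}w=iw+b$ and using $[\mathfrak k^{\mathbb C},Z_\pm]\subseteq Z_\pm$ to reach the congruence $T^{\mathbb C}(v,w)\equiv 2i(\V^{\mathbb C}-i)[v,w] \pmod{\mathfrak k^{\mathbb C}}$ for $v,w\in Z_+$---is the same computation as in the paper's Theorem \ref{teob}. Your converse, however, takes a genuinely different route. The paper never decomposes $\mathfrak g^{\mathbb C}$: for \emph{real} $v,w\in\mathfrak g$ it applies the involutivity hypothesis to the $Z_+$-pair $\big((\V^{\mathbb C}+i)v,(\V^{\mathbb C}+i)w\big)$ and, after conjugating, to the $Z_-$-pair $\big((\V^{\mathbb C}-i)v,(\V^{\mathbb C}-i)w\big)$, adds the two resulting expansions \eqref{launo} and \eqref{lados}, halves, applies $\V$, and closes with $\V^2\equiv-1\pmod{\mathfrak k}$; mixed pairs never appear. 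You instead isolate two structural facts---that $T^{\mathbb C}$ lands in $\mathfrak k^{\mathbb C}$ on $Z_+\times Z_-$ \emph{unconditionally} (your computation $T^{\mathbb C}(v,w)=(\V^{\mathbb C}-i)[v,b]+(\V^{\mathbb C}+i)[a,w]-[a,b]-((\V^{\mathbb C})^2+1)[v,w]$ is right, and each term is killed by $[\mathfrak k^{\mathbb C},Z_\pm]\subseteq Z_\pm$), and that $\mathfrak g^{\mathbb C}=Z_++Z_-$ with $Z_+\cap Z_-=\mathfrak k^{\mathbb C}$, which the paper records only in the split case (Remark \ref{remsupl}) but which indeed holds in general via $x=\tfrac{1}{2i}(\V^{\mathbb C}+i)x-\tfrac{1}{2i}(\V^{\mathbb C}-i)x$, exactly as you need it---and then conclude by bilinearity over the four pieces, with $Z_-\times Z_-$ handled by conjugation. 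Your version is slightly longer but more conceptual: it explains \emph{why} involutivity of the single subspace $Z_+$ suffices (the mixed torsion is automatically in $\mathfrak k^{\mathbb C}$) and produces reusable lemmas, whereas the paper's addition trick followed by post-composition with $\V$ is shorter but its cancellations are less transparent. One small point worth noting: you correctly work only with the congruence $\V^{\mathbb C}[k,v]\equiv[k,\V^{\mathbb C}v]\pmod{\mathfrak k^{\mathbb C}}$ derived from admissibility, whereas the paper's equation \eqref{complex-commutation} is printed as an exact equality---stronger than admissibility warrants, though the paper too only ever uses it modulo $\mathfrak k^{\mathbb C}$.
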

This is done in Theorem \ref{teob} and Corollary \ref{integrability} below. Remarkably, the existing proofs of this characterization on homogeneous spaces employ certain properties of vector fields and almost complex structures that are not always satisfied; in particular the proof by Fr\"olicher in the finite dimensional setting \cite[Section 19, Satz 2]{frolicher55} involves the existence of local cross sections of the quotient map $\pi:G\to \M=G/K$ (see Remark \ref{split}), which may not exist in general in the Banach setting. On the other hand, in \cite[Theorem 13]{beltita05integrability}, which is stated in the Banach setting, there is no mention of Nijenhuis operators and the discussion concerns only almost complex structures, which involves the additional constraint $\J^2=-1$ (a particular case of our results). We also note that there is a problem with the proof of \cite[Theorem 13]{beltita05integrability} (see Remarks \ref{rem:alpha-rel}, \ref{notrel} and \ref{split} below), which our approach clarifies.

The results of this paper will also be applied in the study of almost K\"ahler structures on the coadjoint orbits of the unitary groups, \cite{GLT-kahler}.

\smallskip

Here is a short description of the organisation of this paper: in Section \ref{section:hs} we introduce the necessary ideas and objects from the theory of vector bundles and homogeneous spaces, we discuss vector fields in the homogeneous space and homogeneous vector bundle maps, we point out some possible pitfalls and illustrate them with an elementary example in the unit sphere of $\mathbb R^3$. In Section \ref{section:ntorsion}, we recall the notion of Nijenhuis torsion for a vector bundle map, and using the exponential chart of the group $G$ and what we call projected vector fields in $G/K$, we prove the first main Theorem \ref{torsionJ}. In Section \ref{section:almost}, we present the almost complex structures as special cases of the homogeneous maps discussed before, and we prove the second main result of the paper, Theorem \ref{teob}. We finish the paper discussing some examples and applications of our main theorems.

\section{Homogeneous  structures}\label{section:hs}

In this section main structures are introduced and the terminology and notations are fixed.

\subsection{Notations and basic properties}

\begin{notation}\label{not1} Let $\M_1,\M_2$ be smooth real manifolds and $E$ be a real Banach space.
\begin{itemize}
    \item If $f:\M_1\to \M_2$ is a smooth map we denote with $f_*:T\M_1\to T\M_2$ its differential, which at each point $m\in\M_1$ will be denoted $f_{*m}:T_m\M_1\to T_{f(m)}\M_2$. 
    \item We say that $f$ is an \emph{immersion} if  for all $m\in \M_1$ the map $f_{*m}$ is an injection with closed range,  and we say that $f$ is a \emph{submersion} if $f_{*m}$ is an surjection for all $m\in \M_1$.
    \item If $X$ is a vector field in $\M_1$ and $p\in \M_1$, we sometimes write $X_p$ instead of $X(p)$ for convenience.
    \item If $X_1,X_2$ are vector fields in $\M_1$, $\M_2$ respectively, they are $f$-related if $X_2(f(m))=f_{*m}(X_1(m))$ for all $m\in \M_1$. It is well-known that in this case if also a vector field $Y_2$ is $f$-related with $X_2$, then
\begin{equation}\label{lieb}
    [X_2,Y_2]\big(f(m)\big)=f_{*m}\big([X_1,Y_1](m)\big),
\end{equation}
    where $[\cdot,\cdot]$ denotes the Lie bracket of vector fields.
    \item We use $\mathcal B(E)$ to denote the space of bounded linear operators acting on $E$ and we denote with $GL(E)$ the group of invertible bounded operators.
\end{itemize}
\end{notation}

\medskip

\begin{notation} Let $G$ be a real Banach--Lie group with Banach--Lie algebra~$\mathfrak g$. 
\begin{itemize}
    \item The left and right multiplication by elements $g\in G$ will be denoted as $l_g(h):=gh$ and $r_g(h):=hg$, and the conjugation is $l_gr_{g^{-1}}$ i.e. $h\mapsto ghg^{-1}$.
    \item The differential of $l_g$ at the unit element $h=1$ will be denoted as $L_g$ i.e. $L_g=(l_g)_{*1}$ and likewise $R_g= (r_g)_{*1}$. 
\item The adjoint map on the Lie algebra (the differential of conjugation at the identity) is denoted as $\Ad_g$ i.e. $\Ad_g= L_gR_{g^{-1}}$. 
\item The Lie bracket in $\mathfrak g$ will be denoted as $[v,w]=\ad_v w$, where $\ad=(\Ad)_{*1}$ is the differential at $g=1$ of the adjoint representation of the group $\Ad:G\to  GL(\mathfrak g)$.
\end{itemize}
\end{notation}

\begin{definition}[Homogeneous spaces]\label{homs} Let $K$ be an immersed Banach--Lie subgroup of $G$ with Banach--Lie subalgebra $\mathfrak k\subset \mathfrak g$. We say that 
$G/K$ is a \emph{homogeneous space} of $G$ if the quotient space for the right action of $K$ on $G$
\[
G/K = \{ gK, g\in G\}
\]
has a Hausdorff Banach manifold structure such that
the quotient map $\pi(g)=gK$ is a submersion.
\end{definition}

This is guaranteed for instance, if $K$ is a split Banach--Lie subgroup (i.e the Lie algebra $\mathfrak k$ is closed in $\mathfrak g$ and has a closed complement) but we do not require it.
Since $K=\pi^{-1}(\pi(K))$ is a closed subgroup of $G$, and since we are imposing  $\pi$ to be a submersion, then in fact $K$ must be embedded in $G$, see \cite[Corollary 4.3]{aneeb} (however, it may not be split as mentioned before, see Example \ref{ex:compact}).

\medskip

\begin{notation} Let $\pi:G\to G/K$ be a homogeneous space. 
 \begin{itemize}
    \item The base point in $G/K$ will be denoted as $p_0=\pi(K)$.
    \item The action of $G$ will be denoted as $\alpha:G\times G/K\to G/K$ i.e. $\alpha(g,p)=\pi(gh)$ for $p=\pi(h)\in G/K$. 
    \item For a fixed $g\in G$, we denote by $\alpha_g\in \Aut(G/K)$ the mapping $\alpha_g(p)=\alpha(g,p)$. Similarly, for a fixed $p\in G/K$ we denote by $\alpha^p$ the mapping $\alpha^p(g)=\alpha(g,p)$. 
    \item The differential of $\alpha_g$  will be denoted by $(\alpha_g)_*:T(G/K)\to T(G/K)$. The point at which it is evaluated will be indicated as long as it is relevant or necessary. The same considerations will apply to $(\alpha^p)_*: TG \to T(G/K)$.
\end{itemize}
\end{notation}
\medskip

 The following lemmas and remarks collect the trivial (but useful for our purposes) relations among the differentials of the various maps and vector fields on $G/K$.

\medskip

 \begin{lemma}\label{derive}Let $g\in G$, $v\in\mathfrak g$. Then
 \begin{enumerate} 
     \item  $(\alpha_g)_{*}\pi_{*1}=\pi_{*g}L_g$ or equivalently $ \pi_{*g}=(\alpha_g)_*\pi_{*1}L_g^{-1}$.
\item If $p=\pi(g)\in G/K$ then $(\alpha^p)_{*1}=\pi_{*g}R_g=(\alpha_g)_*\pi_{*1}\Ad_g^{-1}$.
\item For any $h\in G$, the action property for derivative of $\alpha$ reads
\begin{equation}\label{noaction}
(\alpha_h)_{*}(\alpha_g)_{*} = (\alpha_{hg})_*.
\end{equation}
It also implies that $(\alpha_g)_*$ is a diffeomorphism of $T(G/K)$.
\item For any $k\in K$ we have $(\alpha_k)_* \pi_{*1}=\pi_{*1}\Ad_k$.
\item The kernel of the differential $\pi_{*1}:\mathfrak g \to T_{p_0}(G/K)\cong \mathfrak g/\mathfrak k$ is equal to~$\mathfrak k$.
\item $\pi_{*g}L_gv=\pi_{*h}L_h w$ iff there exists $k\in K$ such that $h=gk$ and $v-\Ad_kw\in \mathfrak k$.
  \end{enumerate}
 \end{lemma}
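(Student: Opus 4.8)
The plan is to reduce every item to the single equivariance identity at the group level, namely
\[
\alpha_g\circ\pi=\pi\circ l_g,
\]
which is immediate from $\alpha(g,\pi(h))=\pi(gh)$, together with its companions $\alpha^{\pi(g)}=\pi\circ r_g$, the composition law $\alpha_h\circ\alpha_g=\alpha_{hg}$, and the two identities valid for $k\in K$: first $\pi\circ r_k=\pi$ (since $\pi(hk)=hkK=hK=\pi(h)$), and second the fact that conjugation by $k$ equals $l_k\circ r_{k^{-1}}$. Items (1)--(3) then follow by differentiating the appropriate identity at $1\in G$ and applying the chain rule. For (1), differentiating $\alpha_g\circ\pi=\pi\circ l_g$ at $h=1$ gives $(\alpha_g)_*\pi_{*1}=\pi_{*g}L_g$, and the equivalent form follows because $L_g$ is invertible. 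For (2), differentiating $\alpha^{\pi(g)}=\pi\circ r_g$ gives $(\alpha^p)_{*1}=\pi_{*g}R_g$; I would then rewrite $R_g=L_g\Ad_g^{-1}$ (obtained from $\Ad_g=L_gR_{g^{-1}}$ by substituting $g^{-1}$ for $g$) and invoke (1). Item (3) is the chain rule applied to $\alpha_h\circ\alpha_g=\alpha_{hg}$, and invertibility of $(\alpha_g)_*$ follows by taking $h=g^{-1}$, so that $(\alpha_{g^{-1}})_*$ is a two-sided inverse.

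For (4) I would combine (1) with the relation $\pi_{*k}=\pi_{*1}R_{k^{-1}}$ coming from differentiating $\pi\circ r_k=\pi$; since left and right translations commute one has $R_{k^{-1}}L_k=L_kR_{k^{-1}}=\Ad_k$, whence $(\alpha_k)_*\pi_{*1}=\pi_{*k}L_k=\pi_{*1}\Ad_k$.

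Item (5) is the one genuinely requiring the hypotheses rather than mere bookkeeping, and I expect it to be the main point to get right in the Banach setting. Since $\pi$ is a submersion, $\pi_{*1}$ is a surjection onto $T_{p_0}(G/K)$, so the inclusion $\mathfrak k\subseteq\ker\pi_{*1}$ is the easy half: the fibre $\pi^{-1}(p_0)$ equals $K$ (as $gK=K$ iff $g\in K$), $\pi$ is constant on it, and $T_1K=\mathfrak k$. For the reverse inclusion I would use the local normal form of a submersion together with the fact (guaranteed here, since $\pi$ is a submersion and $K$ is closed, hence embedded) that the fibre is a submanifold whose tangent space at $1$ is exactly $\ker\pi_{*1}$; comparing with $T_1K=\mathfrak k$ gives equality. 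The identification $T_{p_0}(G/K)\cong\mathfrak g/\mathfrak k$ is then the first isomorphism theorem applied to the bounded surjection $\pi_{*1}$ with closed kernel $\mathfrak k$.

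Finally, for (6) the key observation is that the two sides live a priori in different tangent spaces: $\pi_{*g}L_gv\in T_{\pi(g)}(G/K)$ while $\pi_{*h}L_hw\in T_{\pi(h)}(G/K)$. Hence equality forces $\pi(g)=\pi(h)$, that is $gK=hK$, that is $h=gk$ with $k=g^{-1}h\in K$, which yields the first condition. Granting $h=gk$, I rewrite both sides via (1) as $(\alpha_g)_*\pi_{*1}v$ and $(\alpha_h)_*\pi_{*1}w=(\alpha_g)_*(\alpha_k)_*\pi_{*1}w$ using (3), cancel the diffeomorphism $(\alpha_g)_*$, and apply (4) to the right-hand side, reducing the equation to $\pi_{*1}v=\pi_{*1}\Ad_kw$. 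By (5) this is exactly $v-\Ad_kw\in\mathfrak k$. The converse runs the same chain of equalities backwards, so no additional argument is needed; the only mild subtlety is justifying the cancellation of base points, which is precisely what (3) and the invertibility of $(\alpha_g)_*$ provide.
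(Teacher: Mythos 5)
Items (1)--(4) and (6) of your proposal follow the paper's proof essentially verbatim: the same differentiations of $\alpha_g\circ\pi=\pi\circ l_g$, $\alpha^{\pi(g)}=\pi\circ r_g$, $\alpha_h\circ\alpha_g=\alpha_{hg}$ and $\pi\circ r_k=\pi$, and in (6) the same chain $(\alpha_h)_*\pi_{*1}w=(\alpha_g)_*(\alpha_k)_*\pi_{*1}w=(\alpha_g)_*\pi_{*1}\Ad_k w$ followed by cancelling $(\alpha_g)_*$ and invoking (5). Your minor rearrangements (e.g.\ deriving $R_g=L_g\Ad_g^{-1}$ before applying (1) in item (2)) are correct and equivalent.

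The genuine gap is in the hard inclusion $\ker\pi_{*1}\subseteq\mathfrak k$ of item (5). You appeal to ``the local normal form of a submersion'' to conclude that the fibre $\pi^{-1}(p_0)=K$ is a submanifold whose tangent space at $1$ is \emph{exactly} $\ker\pi_{*1}$. In the Banach category this normal form (and the regular-fibre theorem it yields) requires the kernel of the differential to be a \emph{complemented} subspace; but the paper deliberately defines a submersion only by surjectivity of each $f_{*m}$ (Notation \ref{not1}) precisely so as to cover the non-split case, e.g.\ Example \ref{ex:compact}, where $\mathfrak k=\mathcal K(\H)$ has no closed complement in $\mathcal B(\H)$ and no such straightening is available (relatedly, Remark \ref{split} stresses that local sections of $\pi$ may fail to exist). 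The embeddedness of $K$, which you cite parenthetically, only gives $T_1K=\mathfrak k$ and the easy inclusion $\mathfrak k\subseteq\ker\pi_{*1}$; it does not bound $\ker\pi_{*1}$ from above, so your argument is circular at exactly the point that matters. The paper avoids this by a group-theoretic flow argument: if $\pi_{*1}v=0$, then by item (1) the curve $t\mapsto\pi(e^{tv})$ has derivative $(\alpha_{e^{tv}})_{*p_0}\pi_{*1}v=0$, hence is constantly $p_0$; thus $e^{tv}\in\pi^{-1}(p_0)=K$ for all $t$, and so $v\in\mathfrak k$. Substituting this argument for your normal-form step repairs the proof; the identification $T_{p_0}(G/K)\cong\mathfrak g/\mathfrak k$ then follows from the open mapping theorem as you say.
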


\begin{proof}
Differentiating the equality $\alpha_g\circ\pi = \pi\circ l_g$ at the identity $1\in G$, by chain rule we get the first claim. 

Similarly for $p=\pi(g)$ differentiating $\alpha^p=\pi\circ r_g$ at the identity and using the previous identity we get
\[ 
\pi_{*g} R_g = (\alpha_g)_*\pi_{*1}L_g^{-1} R_g = (\alpha_g)_*\pi_{*1}\Ad_g^{-1}, \]
which proves the second claim. 

The third claim follows from differentiating the property $\alpha_g\alpha_h=\alpha_{gh}$ and applying chain rule.

For the fourth claim, we write $\pi(g)=\pi(gk)=\pi(r_kg)$ and we differentiate with respect to $g$ at $g=1$ to get
\[ \pi_{*1} = \pi_{*k} R_k.\]
Thus using the first identity we get:
\[ (\alpha_k)_* \pi_{*1} = \pi_{*k} L_k = \pi_{*1} R_k^{-1} L_k = \pi_{*1} \Ad_k.\]

The fifth assertion follows from
\[
\frac{d}{dt}\,\pi(e^{tv})=(\alpha_{e^{tv}})_{*p_0}\pi_{*1}v=0.
\]
Namely if $v\in\ker \pi_{*1}$, then $\pi(e^{tv})=\pi(1)=p_0$. Thus $\{e^{tv}\}_{t\in\R}\subset K$ what implies $v\in\mathfrak k$.

The sixth assertion is immediate from the fact that the base point must be the same (hence $h=gk$) and then by the previous identities
$$
(\alpha_g)_*\pi_{*1}v=\pi_{*g}L_gv=\pi_{*h}L_hw=(\alpha_h)_*\pi_{*1}w=(\alpha_{gk})_*\pi_{*1}w=(\alpha_g)_*\pi_{*1}\Ad_kw.
$$
Since $(\alpha_g)_*$ is an isomorphism and $\ker\pi_{*1}=\mathfrak k$, the conclusion follows.
\end{proof}

\begin{definition}[Projected vector fields in $G/K$]\label{invf} The right-invariant vector fields $X^v(g)=R_gv$ on the Lie group $G$ can be pushed down to $G/K$ as follows: for $p=\pi(g)$ consider
\begin{equation}\label{Xtilda}
\widetilde{X^v}(p)=(\alpha^p)_{*1}v=\pi_{*g}R_gv=(\alpha_g)_* \pi_{*1}\Ad_g^{-1}v\in T_pG/K,
\end{equation}
where the second and third equalities come from Lemma \ref{derive}(2). Since $p\mapsto (\alpha^p)_{*1}v=\alpha_{*(1,p)}(v,0)$ depends smoothly on $p$, this defines a vector field in $G/K$.
\end{definition}

\medskip

\begin{remark}\label{rem:alpha-rel} If $p=\pi(h)$ then by Lemma \ref{derive}(3), we have
$$
(\alpha_g)_{*p}\widetilde{X^v}(p)=(\alpha_{gh})_{*p_0}\pi_{*1}\Ad_h^{-1} v,
$$
while 
$$
\widetilde{X^v}(\alpha_g(p))=\widetilde{X^v}(\pi(gh))=(\alpha_{gh})_*\pi_{*1}\Ad_{gh}^{-1}v.
$$
Since $(\alpha_{gh})_*$ is an isomorphism, the equality $(\alpha_g)_{*p}\widetilde{X^v}(p)=\widetilde{X^v}(\alpha_g(p))$ can only happen if 
$$
\Ad_h^{-1}(\Ad_g^{-1}v-v)\in \ker \pi_{*1} = \mathfrak k.
$$
So the vector fields $\widetilde{X^v}$ \emph{are not $\alpha_g$-related with themselves} in general, unlike left invariant vector fields on a Lie group. 
\end{remark}

\medskip

The following computation of Lie brackets of vector fields will be needed later:
\begin{proposition}\label{alpharel}
Fix $h\in G$ and $v\in \mathfrak g$. The projected vector field  $\widetilde{X^{\Ad_h v}}$ is the unique vector field in $G/K$ which is $\alpha_h$-related to the projected vector field $\widetilde{X^v}$. In other words
\begin{equation}\label{tildaX_and_Ad}
(\alpha_h)_{*}\widetilde{X^v}\alpha_h^{-1} = \widetilde{X^{\Ad_h v}}.   
\end{equation}
\end{proposition}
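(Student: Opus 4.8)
The plan is to verify the defining relation of $\alpha_h$-relatedness directly from the explicit formula for the projected vector fields, and then deduce uniqueness from the fact that $\alpha_h$ is a diffeomorphism of $G/K$. Recall from Notation \ref{not1} that $\widetilde{X^{\Ad_h v}}$ is $\alpha_h$-related to $\widetilde{X^v}$ precisely when $\widetilde{X^{\Ad_h v}}(\alpha_h(p)) = (\alpha_h)_{*p}\widetilde{X^v}(p)$ for every $p\in G/K$, and that equation \eqref{tildaX_and_Ad} is merely this condition rewritten at $q=\alpha_h(p)$ (so that $p=\alpha_h^{-1}(q)$). Thus it suffices to establish the pointwise identity above.

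First I would write $p=\pi(g)$ for some $g\in G$, so that $\alpha_h(p)=\alpha(h,\pi(g))=\pi(hg)$. For the left-hand side, using the expression $\widetilde{X^v}(p)=(\alpha_g)_*\pi_{*1}\Ad_g^{-1}v$ from Definition \ref{invf} together with the composition law $(\alpha_h)_*(\alpha_g)_*=(\alpha_{hg})_*$ of Lemma \ref{derive}(3), I obtain
\[
(\alpha_h)_{*p}\widetilde{X^v}(p)=(\alpha_h)_*(\alpha_g)_*\pi_{*1}\Ad_g^{-1}v=(\alpha_{hg})_*\pi_{*1}\Ad_g^{-1}v.
\]
For the right-hand side, applying the same formula from Definition \ref{invf} with base element $hg$ and Lie algebra vector $\Ad_h v$ gives
\[
\widetilde{X^{\Ad_h v}}(\pi(hg))=(\alpha_{hg})_*\pi_{*1}\Ad_{hg}^{-1}\Ad_h v.
\]

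The two expressions coincide once I simplify the adjoint factor: since $\Ad$ is a group homomorphism, $\Ad_{hg}^{-1}=\Ad_g^{-1}\Ad_h^{-1}$, whence $\Ad_{hg}^{-1}\Ad_h=\Ad_g^{-1}$, and the right-hand side reduces to $(\alpha_{hg})_*\pi_{*1}\Ad_g^{-1}v$, matching the left. This proves \eqref{tildaX_and_Ad}. For uniqueness I would use that $\alpha_h\in\Aut(G/K)$ is a diffeomorphism (with inverse $\alpha_{h^{-1}}$): given $\widetilde{X^v}$, the relatedness condition prescribes the value of any $\alpha_h$-related field at each point $\alpha_h(p)$, and since $\alpha_h$ is surjective this fixes such a field at every point of $G/K$, so $\widetilde{X^{\Ad_h v}}$ is the only one.

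I do not expect any serious obstacle here; the argument is an exercise in the functoriality of differentials combined with the homomorphism property of $\Ad$. The only point demanding care is the bookkeeping of the adjoint maps and the correct reading of the direction of relatedness — and indeed the necessity of the twist by $\Ad_h$ is exactly what Remark \ref{rem:alpha-rel} anticipates, where the untwisted fields $\widetilde{X^v}$ fail to be $\alpha_g$-related to themselves.
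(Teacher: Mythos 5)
Your proof is correct and follows essentially the same route as the paper's: both verify \eqref{tildaX_and_Ad} pointwise from the explicit formula \eqref{Xtilda}, the composition law of Lemma \ref{derive}(3), and the homomorphism property of $\Ad$, differing only in whether one evaluates at $q=\alpha_h(p)$ or pre-composes with $\alpha_h^{-1}$. Your added remark on uniqueness (the relatedness condition plus surjectivity of $\alpha_h$ pins down the field everywhere) is a correct, if routine, supplement that the paper leaves implicit.
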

\begin{proof}Let $p=\pi(g)$, then we have $\alpha_h^{-1}(p) = \pi(h^{-1}g)$ and by equation \eqref{Xtilda} and Lemma~\ref{derive}(2) we get
\begin{align*}
(\alpha_h)_{*}\widetilde{X^v}\alpha_h^{-1} (p) &= 
(\alpha_h)_{*}(\alpha_{h^{-1}g})_* \pi_{*1}\Ad_{h^{-1}g}^{-1}v  = (\alpha_g)_{*} \pi_{*1}\Ad_{g^{-1}h}v \\ & = (\alpha_g)_{*} \pi_{*1}\Ad_{g}^{-1} \Ad_h v =\pi_{*g}R_g\Ad_hv = \widetilde{X^{\Ad_h v}}(p).\qedhere
\end{align*}
\end{proof}

\medskip

We now take a look at Lie brackets of vector fields defined on $G/K$:

\begin{proposition}\label{prop:pirel}
The fields $X^v$ and $\widetilde{X^v}$ are $\pi$-related, and in particular for $v,w\in\mathfrak g$ we have
$$
[\widetilde{X^v},\widetilde{X^w}]=-\widetilde{X^{[v,w]}},
$$
where on the left we have the Lie bracket of vector fields on the manifold $G/K$, and on the right $[v,w]=\ad_v w$ is the Lie bracket in $\mathfrak g$. 
\end{proposition}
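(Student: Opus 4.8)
The plan is to reduce everything to the bracket of right-invariant fields on $G$ itself, using the fact recorded in \eqref{lieb} that $\pi$-relatedness is preserved under the Lie bracket.

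First I would check the $\pi$-relatedness of $X^v$ and $\widetilde{X^v}$, which is essentially built into Definition~\ref{invf}: for every $g\in G$ we have $X^v(g)=R_gv$, and by \eqref{Xtilda} the value of the projected field at $p=\pi(g)$ is exactly $\widetilde{X^v}(\pi(g))=\pi_{*g}R_gv=\pi_{*g}\big(X^v(g)\big)$. This is precisely the defining relation for $X^v$ and $\widetilde{X^v}$ to be $\pi$-related, so nothing beyond unwinding the definitions is required, and the same applies to the pair $X^w,\widetilde{X^w}$.

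Applying \eqref{lieb} with $f=\pi$, $X_1=X^v$, $X_2=\widetilde{X^v}$, $Y_1=X^w$, $Y_2=\widetilde{X^w}$, I would then obtain, for each $g\in G$,
$$
[\widetilde{X^v},\widetilde{X^w}]\big(\pi(g)\big)=\pi_{*g}\big([X^v,X^w](g)\big).
$$
This reduces the statement to computing the bracket $[X^v,X^w]$ of the right-invariant fields on the group. The key step — and the one place where care is needed — is the sign: unlike left-invariant fields, right-invariant fields realize the \emph{opposite} of the Lie algebra bracket, so that $[X^v,X^w]=-X^{[v,w]}$. I would justify this by the standard flow argument. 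The flow of $X^v$ is the left translation $g\mapsto e^{tv}g$; since the bracket of right-invariant fields is again right-invariant, it suffices to evaluate at the identity, where pulling $X^w$ back along this flow gives $\Ad_{e^{-tv}}w$, whose derivative at $t=0$ is $-\ad_v w=-[v,w]$. Hence $[X^v,X^w](g)=-R_g[v,w]=-X^{[v,w]}(g)$.

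Substituting this into the displayed identity yields
$$
[\widetilde{X^v},\widetilde{X^w}]\big(\pi(g)\big)=-\pi_{*g}R_g[v,w]=-\widetilde{X^{[v,w]}}\big(\pi(g)\big),
$$
and since $\pi$ is surjective the point $p=\pi(g)$ exhausts $G/K$, giving the claimed identity $[\widetilde{X^v},\widetilde{X^w}]=-\widetilde{X^{[v,w]}}$. In the Banach setting the only thing to verify is that the flows, differentials and the exponential map employed are the standard smooth objects available for any Banach--Lie group, which they are; thus I expect no genuine obstacle beyond correctly tracking the sign in the group-level bracket.
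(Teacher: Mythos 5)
Your proof is correct and follows essentially the same route as the paper's: establish that $X^v$ and $\widetilde{X^v}$ are $\pi$-related directly from \eqref{Xtilda}, invoke \eqref{lieb}, and track the sign coming from the fact that right-invariant fields realize the opposite bracket, $[X^v,X^w]=-X^{[v,w]}$. The only difference is that you also prove this last group-level identity via the flow $g\mapsto e^{tv}g$ and the derivative of $\Ad_{e^{-tv}}$, whereas the paper simply cites it as well known; your flow computation is correct and valid in the Banach setting.
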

\begin{proof}
Since $\widetilde X^v(\pi(g))=\pi_{*g} X^v(g)$ by definition, the fields are $\pi$-related. Since the Lie bracket of right invariant vector fields in $G$ is again right invariant with $[X^v,X^w]=-X^{[v,w]}$, the assertion of the lemma follows by means of \eqref{lieb}. Note the difference in signs since the Lie bracket in $\mathfrak g$ is defined as usual by left invariant vector fields, but we employ here right invariant vector fields.
\end{proof}

\begin{remark}[The problem with left-invariant fields]
    It is not possible to push down to the quotient space $G/K$ the left-invariant vector fields $g\mapsto L_g v$ on $G$. A natural candidate $X(p)=\pi_{*g}L_gv$ for $p=\pi(g)$ doesn't make sense in the quotient tangent bundle as it depends on the representative $g$ in $G$. More precisely, independence on the choice of $g$ would require $\pi_{*k}L_kv=\pi_{*1}v$ for any $k\in K$. However then
$$
\pi_{*k}L_kv=(\alpha_k)_* \pi_{*1}v=\pi_{*1}\Ad_kv
$$
by Lemma \ref{derive}(4). This would imply  $\Ad_k v-v\in \mathfrak k$ for all $k\in K$ by Lemma \ref{derive}(5), and, after differentiating with respect to $k$ at $k=1$,  also that $[z,v]\in\mathfrak k$ for any $z\in \mathfrak k$. This is not satisfied for a general $v\in \mathfrak g$. It is satisfied for $v \in \mathfrak k$ but then we obtain the zero vector field in $G/K$.
\end{remark}

\medskip

\subsection{Homogeneous vector bundle maps and admissible operators}

In this section we discuss homogeneous vector bundle maps acting in the tangent bundle $T(G/K)$, and then we discuss the ones which come from  a linear map defined ``upstairs'' in $\mathfrak g$. All vector bundle maps under considerations in this paper are understood to be covering identity, i.e. mapping each fiber to the same fiber.

\begin{definition}\label{hls}A smooth vector bundle map $\N:T(G/K)\to T(G/K)$ is called \emph{homogeneous} if it is invariant for the action by the automorphisms~$\alpha_g$:
\[
    (\alpha_g)_{*}\, \N_p=\N_{\alpha_g(p)}(\alpha_g)_{*} \qquad \textit{ for any } \; p\in G/K \; \textit{ and any  } \; g\in G.
\]
\end{definition} 
By homogeneity, any such map comes from some $\N_{p_0}\in \mathcal B(T_{p_0}(G/K))$ at the base point $p_0$. 

We now look at the situation from the side of the Lie algebra $\mathfrak g$ of the group $G$:

\begin{definition}\label{homj}We will consider the following \emph{admissible}  linear bounded operators on $\mathfrak g$:
\begin{equation}
\lgk=\{\I\in \mathcal B(\mathfrak g):  \, \I\mathfrak k\subset \mathfrak k,\; \Ran(\Ad_k \I-\I\Ad_k)\subset \mathfrak k \; \forall \, k\in K \}.
\end{equation}
\end{definition}

\begin{remark}\label{more_general}
    If $K$ is split in $G$, i.e. if $\mathfrak k$ has a closed complement $\mathfrak m$ in $\mathfrak g$, $\mathfrak g=\mathfrak k\oplus \mathfrak m$,  then one can consider the following subset of \emph{admissible}  linear bounded operators on $\mathfrak g$: 
    \[
\mathcal{B}(G, K) =\{\I\in \mathcal B(\mathfrak g):  \, \mathfrak k\subset \ker \I;\; \I\mathfrak{m} \subset \mathfrak{m};\; \Ran(\Ad_k \I-\I\Ad_k)\subset \mathfrak k \; \forall \, k\in K \}.
\]  
  Note that  $\mathcal{B}(G, K)$ is strictly contained in  $\lgk$ (see also remark~\ref{remsupl} in the case of almost complex structures).
\end{remark}

\begin{remark}\label{connected}
Note that if $\Ad_k\I - \I\Ad_k$ takes values in $\mathfrak k$ for all $k\in K$, then writing $k=e^{tz}$ with $z\in \mathfrak k$ and differentiating $e^{t\ad_ z}\I v-\I e^{t\ad_z}v\in \mathfrak k$ we see that $[z,\I v]-\I[z,v]\in\mathfrak k$ for all $z\in\mathfrak k$ and all $v\in \mathfrak g$, i.e. 
\begin{equation}\label{derivcom}
\Ran(\I\circ \ad_z - \ad_z \circ \I) \subset \mathfrak k \qquad \textrm{for all}\qquad z\in \mathfrak k.
\end{equation} 
If $K$ is connected, then it is generated by a neighborhood of the unit element and equation~\eqref{derivcom} implies $\Ran(\Ad_k\I -\I\Ad_k)\subset \mathfrak k$ for all $k\in K$. More precisely, let $V$ be an open neighborhood of the zero element in $\mathfrak{k}$ such that the restriction of the exponential map to $V$ is a diffeomorphism and consider $W = \exp V\cap \left(\exp V\right)^{-1}$. Then the subgroup $H = \bigcup_{n\geq 1} W^n$ is open in $K$. Since $K$ is the union of equivalence classes modulo $H$, $K = \bigcup_{k\in K}kH$, $H$ is the complement of the open set $\bigcup_{k\notin H} kH$, hence $H$ is also closed in $K$.  Therefore $H$  equals $K$ if the latter is connected. For each $k\in K$ we can then write $\Ad_k=\prod_i \Ad_{e^{z_i}}=\prod_i e^{\ad_{z_i}}$ for a finite number of  $z_i\in\mathfrak k$ therefore in this case \eqref{derivcom} is equivalent to $\Ran(\Ad_K \I-\I\Ad_K)\subset \mathfrak k$.
\end{remark}

\begin{definition}\label{def:homog_struct}
The \emph{homogeneous vector bundle map induced by} $\I\in \lgk$ is the smooth vector bundle map $\N:T(G/K) \to T(G/K)$ given at each $p=\pi(g)\in G/K$ by
\begin{equation}\label{Np}
\N_p = (\alpha_g)_* \N_{p_0} \, (\alpha_g)_*^{-1},
\end{equation}
where $\N_{p_0}: T_{p_0}G/K \rightarrow T_{p_0}G/K$ with $p_0 = \pi(K)$, is defined as 
\begin{equation}\label{Np0}
\N_{p_0}\pi_{*1}v:=\pi_{*1}\I v
\end{equation}
for $v\in\mathfrak g$.
\end{definition} 

\medskip

\begin{proposition}\label{prop:descend}
For any $\I\in \lgk$, the map $\N:T(G/K)\to T(G/K)$ in the previous definition is a well-defined homogeneous vector bundle map in $G/K$.
\end{proposition}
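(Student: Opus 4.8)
The plan is to check, in order, four things: (i) that $\N_{p_0}$ defined by \eqref{Np0} is a well-defined bounded operator on $T_{p_0}(G/K)$; (ii) that the assignment $p\mapsto\N_p$ in \eqref{Np} is independent of the representative $g$ with $\pi(g)=p$; (iii) that the resulting family satisfies the homogeneity identity of Definition \ref{hls}; and (iv) that $\N$ is smooth. Steps (i)--(iii) are purely algebraic, and are precisely where the two defining conditions of $\lgk$ are used; step (iv) is the only analytically delicate point.

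For (i), since $\pi_{*1}\colon\mathfrak g\to T_{p_0}(G/K)$ is a surjection with $\ker\pi_{*1}=\mathfrak k$ by Lemma \ref{derive}(5), the prescription \eqref{Np0} determines $\N_{p_0}$ as soon as $\pi_{*1}v=\pi_{*1}v'$ forces $\pi_{*1}\I v=\pi_{*1}\I v'$, i.e. as soon as $\I\mathfrak k\subset\mathfrak k$; this is the first condition in Definition \ref{homj}, and since $\pi_{*1}$ is an open quotient map and $\I$ is bounded, $\N_{p_0}$ is the bounded operator induced by $\I$ on $\mathfrak g/\mathfrak k\cong T_{p_0}(G/K)$. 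For (ii), if $p=\pi(g)=\pi(g')$ then $g'=gk$ for some $k\in K$, and by \eqref{noaction} we have $(\alpha_{g'})_*=(\alpha_g)_*(\alpha_k)_*$, so the two candidate values of $\N_p$ coincide exactly when $(\alpha_k)_*\N_{p_0}=\N_{p_0}(\alpha_k)_*$ on $T_{p_0}(G/K)$ (note $(\alpha_k)_{*p_0}$ preserves $T_{p_0}(G/K)$ because $\alpha_k(p_0)=p_0$). Rewriting $(\alpha_k)_*\pi_{*1}=\pi_{*1}\Ad_k$ via Lemma \ref{derive}(4), one finds for every $v\in\mathfrak g$
\[
(\alpha_k)_*\N_{p_0}\pi_{*1}v-\N_{p_0}(\alpha_k)_*\pi_{*1}v=\pi_{*1}(\Ad_k\I-\I\Ad_k)v,
\]
which vanishes precisely because $\Ran(\Ad_k\I-\I\Ad_k)\subset\mathfrak k=\ker\pi_{*1}$, the second condition in Definition \ref{homj}. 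Thus $\N_p$ is well defined.

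Homogeneity (iii) is then immediate: for $p=\pi(g)$ and any $h\in G$ we have $\alpha_h(p)=\pi(hg)$, and combining \eqref{Np} with the cocycle relation \eqref{noaction} gives
\[
\N_{\alpha_h(p)}(\alpha_h)_*=(\alpha_{hg})_*\N_{p_0}(\alpha_{hg})_*^{-1}(\alpha_h)_*=(\alpha_h)_*(\alpha_g)_*\N_{p_0}(\alpha_g)_*^{-1}=(\alpha_h)_*\N_p.
\]
For smoothness I would first reduce to a neighbourhood of $p_0$: by (iii) the map $\N$ agrees near $\alpha_g(p_0)$ with $(\alpha_g)_*\circ\N\circ(\alpha_g)_*^{-1}$, and since each $(\alpha_g)_*$ is a diffeomorphism of $T(G/K)$ (Lemma \ref{derive}(3)) and $G$ acts transitively, smoothness near $p_0$ propagates to all of $G/K$. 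To treat a neighbourhood of $p_0$ I would encode the construction in the smooth auxiliary map
\[
\Theta\colon G\times\mathfrak g\longrightarrow T(G/K),\qquad \Theta(g,v)=(\alpha_g)_*\pi_{*1}v=\alpha_{*(g,p_0)}(0_g,\pi_{*1}v),
\]
which is smooth because it is assembled from the differential of the smooth action $\alpha$ and the fixed bounded operator $\pi_{*1}$. Since $(\alpha_g)_*^{-1}\Theta(g,v)=\pi_{*1}v\in T_{p_0}(G/K)$, equations \eqref{Np}--\eqref{Np0} yield the key identity $\N(\Theta(g,v))=(\alpha_g)_*\pi_{*1}\I v=\Theta(g,\I v)$, so that $\N\circ\Theta=\Theta\circ(\id_G\times\I)$ is smooth (equivalently, in terms of the projected fields of Definition \ref{invf}, $p\mapsto\N_p\widetilde{X^v}(p)=(\alpha_g)_*\pi_{*1}\I\Ad_g^{-1}v$ is smooth as a function of $g$ and depends only on $\pi(g)$).

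The hard part, and the main obstacle, is to pass from \emph{smooth after composing with $\Theta$} (equivalently, after composing with $\pi$) to \emph{smooth}. When $K$ is split this is routine, since then $\pi$ — and hence $\Theta$ — admits smooth local sections, and one simply precomposes; but in the non-split case for which this paper is designed, no such sections exist, for a local section would split $\mathfrak k$ in $\mathfrak g$. I would therefore have to justify the descent directly, by invoking the universal (quotient) property of the submersion $\pi$ built into the homogeneous-space structure of Definition \ref{homs}: a map out of $G/K$ is smooth as soon as its pullback to $G$ is. In the exponential chart this reduces to the elementary statement that a smooth map on $\mathfrak g$ that is constant along the cosets of the closed subspace $\mathfrak k$ descends to a smooth map on $\mathfrak g/\mathfrak k$, which one checks by a direct Taylor estimate in the quotient norm with no complement required. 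Granting this descent, $\N\circ\Theta$ smooth implies $\N$ smooth, and $\N$ is the desired homogeneous smooth vector bundle map; establishing this descent cleanly in the uncomplemented setting is exactly the delicate point that the rest of the paper is set up to handle.
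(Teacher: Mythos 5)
Your steps (i)--(iii) are correct and coincide with the paper's own proof: well-definedness of $\N_{p_0}$ from $\I\mathfrak k\subset\mathfrak k$, boundedness via the quotient norm and the open mapping theorem, representative-independence from $\Ran(\Ad_k\I-\I\Ad_k)\subset\mathfrak k$ together with Lemma \ref{derive}(4), and homogeneity from the cocycle relation \eqref{noaction}. Your packaging of the well-definedness of $\N_p$ as the intertwining identity $(\alpha_k)_*\N_{p_0}=\N_{p_0}(\alpha_k)_*$ is a slightly cleaner rearrangement of the paper's comparison of the two presentations $X_p=\pi_{*g}L_gv=\pi_{*h}L_hw$ via Lemma \ref{derive}(6), but it is the same computation. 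Your identity $\N\circ\Theta=\Theta\circ(\id_G\times\I)$ is also correct and is implicitly what the paper uses later (it is the $\pi$-relatedness of $\N\widetilde{X^v}$ with the field $g\mapsto L_g\I\Ad_g^{-1}v$ in Lemma \ref{nihenG}).

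The gap is in your step (iv), at the descent from ``smooth after composing with $\pi$'' to ``smooth''. You assert that the universal property of a quotient --- a map out of $G/K$ is smooth as soon as its pullback to $G$ is --- is ``built into'' Definition \ref{homs}. It is not: that definition only requires $\pi_{*g}$ to be surjective at every point (the paper's weak notion of submersion from Notation \ref{not1}), and in the Banach setting this pointwise condition does not by itself deliver the universal property. Your proposed justification, that ``in the exponential chart this reduces to'' the statement that a smooth map on $\mathfrak g$ constant on cosets of the closed subspace $\mathfrak k$ descends smoothly to $\mathfrak g/\mathfrak k$, is where the argument breaks: that reduction presupposes that, in suitable charts, $\pi\circ\exp$ \emph{is} the linear quotient map $\mathfrak g\to\mathfrak g/\mathfrak k$, i.e.\ a local normal form for $\pi$. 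Such a normal form is exactly what a closed complement of $\mathfrak k$ buys (equivalently, the local sections of \cite[Theorem 4.19]{beltita2006}, cf.\ Remark \ref{split}), and it is unavailable in the non-split case this proposition is designed to cover --- as you yourself observe when noting that a local section would split $\mathfrak k$. So the linear coset-descent statement is fine, but the reduction to it is circular. What does survive without a complement is Lyusternik--Graves metric regularity of $\pi$ (surjective differential gives local openness with linear rate, hence Lipschitz approximate sections), from which continuity and, with substantially more work, differentiability of a $\pi$-descended map can be extracted; but that is a genuine argument, not an elementary Taylor estimate. For comparison, the paper itself disposes of smoothness in one sentence (``composition of smooth maps''), so your step (iv) is more scrupulous than the published proof about exactly the right point --- it just does not close the gap it identifies.
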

\begin{proof} First let us show that $\N_{p_0}: T_{p_0}G/K \rightarrow T_{p_0}G/K$ is well-defined by \eqref{Np0}.
Since $\pi$ is a submersion, any tangent vector in $T_{p_0}G/K$, can be written as $\pi_{*1}X$, for some $X\in \mathfrak g$. We need to show that the value of $\N_{p_0} \pi_{*1}X \in T_{p_0}G/K$ does not depend on the representative $X\in \mathfrak g$. Consider $X_1, X_2 \in \mathfrak g$ such that $\pi_{*1}(X_1) = \pi_{*1}(X_2)$, i.e. $X_1-X_2 \in \mathfrak {k}$. We have $\pi_{*1} \I(X_1-X_2) = 0$ since $\I\mathfrak k \subset \mathfrak k$. By linearity, we therefore have
\[\pi_{*1} \I(X_1) = \pi_{*1} \I(X_2).\]
Since $\pi_{*1}$ is a bounded surjection, the norm in $T_{p_0}(G/K)\simeq \mathfrak g/\mathfrak k$ must be equivalent to the quotient norm $\|\cdot\|_{quot}$ by the open mapping theorem. To show that $\N_{p_0}$  is bounded for the quotient norm in $T_{p_0}(G/K)$, note that
\[
\|\N_{p_0}[v]\|_{quot}=\inf_{z\in\mathfrak k}\|\I v-z\|\le \inf_{z\in \mathfrak k}\|\I v-\I z\|\le \|\I\| \inf_{z\in \mathfrak k}\|v-z\|=\|\I\| \|v\|_{quot}.
\]
Now we take an arbitrary vector $X_p\in T_p(G/K)$ and we write it as $X_p=\pi_{*g}L_gv=\pi_{*h}L_hw$ for some $v, w \in \mathfrak g$. Using Lemma \ref{derive}(6) it follows that $h = gk$ and $v - \Ad_k w \in \mathfrak k$, hence
\[ \pi_{*1}v = \pi_{*1}\Ad_k w. \]
Comparing values of $\N$ on those two presentations of $X_p$ we get on one hand
\begin{align*}
(\alpha_g)_* \N_{p_0} (\alpha_g)_*^{-1}X_p & =(\alpha_g)_* \N_{p_0}(\alpha_g)_*^{-1}\pi_{*g}L_g v=(\alpha_g)_* \N_{p_0}\pi_{*1}v \\
&= (\alpha_g)_* \N_{p_0}\pi_{*1}\Ad_k w= (\alpha_g)_* \pi_{*1} \I \Ad_k w \\ &= (\alpha_g)_* \pi_{*1}\Ad_k \I w
\end{align*}
since $\I$ commutes with $\Ad_k$ modulo $\ker\pi_{*1}=\mathfrak k$. On the other hand
\begin{align*}
(\alpha_h)_* \N_{p_0} (\alpha_h)_*^{-1}X_p &= (\alpha_h)_* \N_{p_0} (\alpha_h)_*^{-1}\pi_{*h}L_hw =(\alpha_h)_* \N_{p_0} \pi_{*1}w\\
&=(\alpha_h)_* \pi_{*1}\I w  =(\alpha_g)_* (\alpha_k)_* \pi_{*1}\I w =(\alpha_g)_* \pi_{*1}\Ad_k \I w
\end{align*}
by means of Lemma \ref{derive}(4). This proves that $\N$ is well-defined by \eqref{Np}. By construction $\N$ is a vector bundle map, and since it is given by the composition of smooth maps, it is smooth. The $\alpha_g$-invariance is a consequence of the definition $\N_p=(\alpha_g)_*\I(\alpha_g)_*^{-1}$,  the definition of $(\alpha_g)_*$, and the chain rule for $\alpha$.
\end{proof}

\begin{remark}\label{lifting}
    In general, not every linear morphism of $T_{p_0}(G/K)$ comes from a linear operator on $\mathfrak g$. This is related to the so called quotient lifting property of Banach spaces, see e.g. \cite{lindenstrauss,harmand}.
\end{remark}

\begin{remark}
Consider the normal bundle over $G$ with fiber at $g\in G$ given by $$\operatorname{Nor}_{g} = T_{g}G/\ker \pi_{*g}.$$  Note that $\ker \pi_{*g}$ is the vector space generated by the infinitesimal action of $K$ on $g\in G$. In particular,
$R_k \ker \pi_{*g} = \ker \pi_{*gk}$, hence the right action of $K$ on $TG$ induces a right action of $K$ on the normal bundle $\operatorname{Nor}$. Since $\pi$ is a submersion, for any $g\in G$ such that $\pi(g) = p$, the quotient Banach space $T_{g}G/\ker \pi_{*g}$ is isomorphic to $T_{p}(G/K)$. Moreover $\pi(g) = \pi(h) = p$ if and only if $h = gk$ for some $k\in K$. It follows that the tangent bundle $T(G/K)$ of the homogeneous space $G/K$ is canonically isomorphic to the quotient of the normal bundle $\operatorname{Nor}$ by the right action of $K$ (see \cite{ciuclea2023shape} for applications of this construction). 
A smooth vector bundle map $\N:T(G/K)\to T(G/K)$ is therefore equivalent to a smooth vector bundle map $\hat{\N}:\operatorname{Nor}/K\to \operatorname{Nor}/K$.
If $\pi: G\rightarrow G/K$ admits a globally defined section $s: G/K\rightarrow G$, then $\hat{\N}$ can be lifted to a $K$-invariant vector bundle map $\tilde{\N}: \operatorname{Nor}\to \operatorname{Nor}$, by defining $\tilde{\N}$ on the range of the section $s$ as $$\tilde{\N}_{s(\pi(g))}(X_{s(\pi(g))}) = \hat{\N}_{\pi(g)}\pi_{*g}(X_{s(\pi(g))}),$$ for $X_{s(\pi(g))}\in 
 \operatorname{Nor}_{s(\pi(g))}  = T_{s(\pi(g))}G/\ker \pi_{*s(\pi(g))}$ and extending it by $K$-invariance to the whole fiber bundle $\operatorname{Nor}$ over $G$. As mentioned in previous remark, not every vector bundle map $\tilde{\N}:\operatorname{Nor}\to \operatorname{Nor}$ can be lifted to a vector bundle map on $TG$. 

\end{remark}

\medskip

\begin{remark}[Projected vector fields and the homogeneous vector bundle map]\label{notrel}
The vector bundle maps can be seen also as maps on vector fields. Let's apply the map $\N:T(G/K)\to T(G/K)$ defined by means of $\I\in \mathcal{A}(G,K)$ as in Definition \ref{def:homog_struct} to the projected vector fields on $G/K$. If we compute $\N\widetilde{X^v}$, $v\in\mathfrak g$, we note that it differs from $\widetilde{X^{\I v}}$ in general. It can be seen as follows. For $p=\pi(g)$, by definition \eqref{Xtilda},  we have
\[\widetilde{X^{\I v}}(p)=(\alpha_g)_*\pi_{*1}\Ad_g^{-1}\I v,\] 
while
\begin{align*}
\N_p\widetilde{X^v}(p) & =(\alpha_g)_*\N_{p_0}(\alpha_g)_*^{-1}\widetilde{X^v}(p)=(\alpha_g)_*\N_{p_0}(\alpha_g)_*^{-1}(\alpha_g)_*\pi_{*1}\Ad_g^{-1}v\\
& =(\alpha_g)_*\N_{p_0}\pi_{*1}\Ad_g^{-1}v = (\alpha_g)_* \pi_{*1}\I\Ad_g^{-1}v.
\end{align*}
Thus for them to be equal, one must have
$$
\Ad_g^{-1}(\I v)-\I(\Ad_g^{-1}v)\in \mathfrak k,
$$
for all $g\in G$ (and not only $g \in K$), which is usually not the case. Compare with Remark \ref{split} below.

\end{remark}

\medskip

We end this section with a simple example that illustrates all the definitions and possible pitfalls: we consider the unit sphere $\mathbb{S}^2$, which can be seen as the homogeneous space $SO(3)/SO(2)$.

\subsection{Example: the sphere $\mathbb{S}^2$}\label{thesphere}
 Identify the sphere $\mathbb{S}^2$ embedded in $\R^3$ with the homogeneous space $SO(3)/SO(2)$ in such a way that the base point corresponds to the north pole $p_0 = (0,0,1)^T$. The action of $G=SO(3)$ (and $\mathfrak g=\mathfrak{so}(3)$ as well) is  given by left matrix multiplication
\[
 \begin{array}{llll}
     \alpha: &G\times \mathbb{S}^2& \longrightarrow & \mathbb{S}^2\\
     & (g, hp_0) & \longmapsto & ghp_0.
 \end{array}
\]
For each $g\in G$ the diffeomorphism $\alpha_g$ of $\mathbb{S}^2$ is then
\[
\alpha_g(hp_0) := ghp_0.
\] 
The sphere  in this picture is the orbit of the vector $p_0$ with respect to the transitive action of $G=SO(3)$ 
\[ \mathbb{S}^2 = G \cdot p_0 =  \{ g p_0\;|\; g\in G\}\]
with stabilizer $K\cong SO(2)$ given by rotations around the $z$-axis. Consider the following basis in $\mathfrak{so}(3)$:
\[
k_0=\begin{pmatrix}
    0 & 1 & 0 \\
    -1 & 0 & 0 \\
    0 & 0 & 0
\end{pmatrix},\quad 
e_1=\begin{pmatrix}
    0 & 0 & 1 \\
    0 & 0 & 0 \\
    -1 & 0 & 0
\end{pmatrix}, \quad 
e_2=\begin{pmatrix}
    0 & 0 & 0 \\
    0 & 0 & 1 \\
    0 & -1 & 0
\end{pmatrix}.\]
Note that $k_0$ spans the Lie algebra $\mathfrak k \cong \mathfrak{so}(2)$ and that $\{e_1p_0, e_2p_0\}$ spans the tangent space to the sphere $\mathbb{S}^2$ at $p_0$. We have the following commutation relations 
\[
 [k_0,e_1]=-e_2, \quad [k_0,e_2]=e_1, \quad [e_1,e_2]=-k_0.
\]

Right-invariant vector fields on $G = SO(3)$ descend to vector fields on the sphere $\mathbb{S}^2$ according to Definition \ref{invf}. Let $v$ be an element in the Lie algebra $\mathfrak{so}(3)$. The right invariant vector field $X^v$ on $G$ whose value at the identity equals $v$ takes the value $vg$ at an arbitrary point $g\in G$. 
It descends to the following projected vector field on the sphere $\mathbb{S}^2$
\[\widetilde{X^v}(p)=(\alpha^p)_{*1}v= vp,\qquad\textrm{for } p\in \mathbb{S}^2.\]

The diffeomorphism $\alpha_g:\mathbb{S}^2\to \mathbb{S}^2$ transforms the vector field $\widetilde{X^v}$ in the following manner (see equation~\eqref{tildaX_and_Ad} in Proposition \ref{alpharel}):
\[ 
gvp=g\widetilde{X^v}(p) = (\alpha_g)_*(\widetilde{X^v})(p) = (\alpha_g)_*(\widetilde{X^v})((\alpha_g)^{-1}(\alpha_g p)) = \widetilde{X^{\Ad_g v}}(gp),
\]
while $\widetilde{X^v}(gp)=vgp$.  In particular, in general $g\widetilde{X^v}(p) \neq \widetilde{X^v}(gp)$ i.e. the projected vector fields $\widetilde{X^v}$ are not $\alpha_g$-related with themselves, see Remark \ref{rem:alpha-rel}. This is shown explicitly here for particular values of $v$ and $g$: let $v=e_1$ and $g=\begin{pmatrix}
    1 & 0 & 0 \\
    0 & -1& 0 \\
    0 & 0 & -1\end{pmatrix}\in SO(3)$. Then 
\[
g \widetilde{X^v}(p_0) =ge_1p_0= g\,  (1,0,0)^T=(1,0,0)^T,
\]
while 
\[
\widetilde{X^v}(gp_0)=  e_1(-p_0)=-e_1p_0=(-1,0,0)^T,
\]
which are different.

\bigskip

Now consider the operator $\I = \ad_{k_0}$ on the Lie algebra $\mathfrak{so}(3)$,
i.e.
\[
\I k_0 = 0,\qquad \I e_1 = -e_2, \qquad \I e_2=e_1.
\]
It preserves $\mathfrak k = \mathbb{R} k_0$. In order to show that it is admissible, we have to verify the claim $\Ran(\Ad_k \I - \I \Ad_k) \subset  \mathfrak k$ for $k\in K$ (see Proposition \ref{prop:descend}). Since $K$ is a connected group, by Remark \ref{connected} it is enough to verify it on the level of the Lie algebra, which is trivial since 
 $\I=\ad_{k_0}$. Hence $\I \in \mathcal{A}(SO(3),K)$ i.e. $\I$ is admissible. Therefore $\N$ descends to a linear operator $\N_{p_0}$ on the tangent space to the sphere at $p_0$ (see Proposition \ref{prop:descend}), defined by
\begin{equation}\label{Iquot}
\N_{p_0}(e_ip_0):=(\I e_i)p_0\qquad \textrm{ in }\{p_0\}^{\perp}=T_{p_0}\mathbb{S}^2,
\end{equation}
and we have the property 
\begin{equation}\label{Np0_so3}
\N_{p_0}(vp_0) =  (\I v)(p_0),\qquad v\in \mathfrak{so}(3).
\end{equation}

We define now the vector bundle map $\N$ on $\mathbb{S}^2$ using the homogeneous action of the group, following Definition \ref{def:homog_struct}:
\[ 
\N_{gp_0} z : = g \N_{p_0} (g^{-1} z), \qquad z\in T_{gp_0}\mathbb{S}^2\subset \R^3,
\]
 hence $g^{-1}z\in T_{p_0}\mathbb{S}^2=\spn\{e_1p_0,e_2p_0\}$ and $\I(g^{-1}z)$ is defined by means of \eqref{Iquot}. This does not depend on the choice of a particular $g$ which we use to obtain $p=gp_0$ because $\I$ is admissible.

Let us present the situation discussed in Remark \ref{notrel}. We have
\begin{align*}    
(\N\widetilde{X^v})(gp_0) & =
\N_{gp_0} \widetilde{X^v}(gp_0) = g \N_{p_0} g^{-1} \widetilde{X^v}(gp_0) = g\N_{p_0} (g^{-1} vgp_0) \\
&= g\I(g^{-1}vg)p_0 = g\I(\Ad_{g}^{-1}v)p_0.
\end{align*}
On the other hand $\widetilde{X^{\I v}} (gp_0)=(\I v)gp_0$. In the general case, those two expressions are far from being equal unless $g\in K$: let us show an explicit example. Consider 
\[ g=\begin{pmatrix}
    1 & 0 & 0 \\
    0 & \cos\theta & -\sin \theta\\
    0 & \sin\theta & \cos\theta
\end{pmatrix}\in SO(3)\] 
and $v=e_2$.
Then $gp_0 = (0,-\sin\theta,\cos\theta)^T$ and $\I v = e_1$. We compute now
\[\widetilde{X^{\I v}}(gp_0) = e_1gp_0 = (\cos\theta, 0 , 0)^T.\]
On the other hand
\[ g^{-1}e_2g = e_2 \xrightarrow{\I} e_1, \]
which implies
\[ (\N\widetilde{X^v})(gp_0) = g\I (g^{-1} e_2 g)p_0 = g e_1 p_0 = g (1,0,0)^T=(1,0,0)^T.\]
In conclusion $\N\widetilde{X^v} \neq \widetilde{X^{\I v}}$. 

\section{The Nijenhuis torsion of a vector bundle map $\N$}\label{section:ntorsion}

\begin{definition}\label{torJ} Let $\mathcal M$ be any smooth manifold and let $\N:T\mathcal M\to T\mathcal M$ be a smooth vector bundle map. The \emph{Nijenhuis torsion} of $\N$ is defined as 
\[
\Omega_{\N}(X,Y)=\N [\N X,Y]+\N [X,\N Y]-[\N X,\N Y]-\N^{\; 2}[X,Y]
\]
for $X,Y$ vector fields in $\mathcal M$. We say that $\N$ is a \emph{Nijenhuis operator} in $\mathcal M$ if its torsion vanishes.
\end{definition}

Note that $\Omega_{\N}$ is anti-symmetric.
The following is well-known for finite dimensional manifolds, see also \cite[Lemma 2]{beltita05integrability} for almost complex structures in the Banach setting (i.e. vector bundle maps such that $\N^2=-1$). Let $E$ denote the Banach space modelling the manifold $\mathcal M$.

\begin{theorem}\label{onlyp} The Nijenhuis torsion of $\N$ at $p\in \mathcal M$ depends only on the values of the vector fields at the point $p$, i.e. $\Omega_{\N}$ is  a tensor. In any manifold chart $(U,\varphi)$, using the local expression of   $\N:\varphi(U)\subset E\to \mathcal B(E)$  and the local expressions of the vector fields $X,Y:\varphi(U)\to E$, one has
\begin{align*}
\Omega_{\N}(X,Y)_p & =\N_p\big( (\N_{*p}X_p)(Y_p)- (\N_{*p}Y_p)(X_p)\big)\\
& \quad +\big(\N_{*p}(\N_pY_p)\big)(X_p)- \big(\N_{*p}(\N_pX_p)\big)(Y_p).
\end{align*} 
\end{theorem}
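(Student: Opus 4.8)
The plan is to compute $\Omega_{\N}(X,Y)_p$ directly in a single manifold chart $(U,\varphi)$ and to read off tensoriality from the resulting formula, which is exactly the statement we want. In such a chart vector fields are identified with their principal parts $X,Y\colon\varphi(U)\to E$, the map $\N$ with a smooth function $\N\colon\varphi(U)\to\mathcal B(E)$, and the Lie bracket of vector fields has the standard local expression $[X,Y]_p=Y_{*p}(X_p)-X_{*p}(Y_p)$, where $X_{*p}=DX_p\in\mathcal B(E)$ denotes the Fréchet derivative of the principal part at $p$. The only analytic input beyond this is the product rule for the vector field $\N X$, whose principal part is $p\mapsto\N_p X_p$: since the evaluation pairing $\mathcal B(E)\times E\to E$ is bounded bilinear (hence smooth), the chain rule yields $D(\N X)_p(h)=(\N_{*p}h)(X_p)+\N_p(X_{*p}h)$ for $h\in E$, and analogously whenever $\N X$ or $\N Y$ occurs inside a bracket. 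These two ingredients are all that the Banach setting requires.

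With them in hand I would expand each of the four terms of $\Omega_{\N}(X,Y)=\N[\N X,Y]+\N[X,\N Y]-[\N X,\N Y]-\N^{2}[X,Y]$. For instance, combining the bracket formula with the product rule gives $[\N X,Y]_p=Y_{*p}(\N_p X_p)-(\N_{*p}Y_p)(X_p)-\N_p(X_{*p}Y_p)$, and applying $\N_p$ produces the first contribution; the remaining three terms are obtained in the same way, keeping careful track of antisymmetry of the bracket and of the fact that each occurrence of $\N X$ or $\N Y$ inside a bracket contributes both a Leibniz term of the form $(\N_{*p}\,\cdot\,)(\cdot)$ and a term of the form $\N_p(\,\cdot_{*p}\,\cdot\,)$.

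The key step — and the reason the statement is true — is that upon summing the four expansions, every term containing a derivative $X_{*p}$ or $Y_{*p}$ of the vector fields cancels in pairs: the two occurrences of $\N_p Y_{*p}(\N_p X_p)$ (one from $\N[\N X,Y]$, one from $-[\N X,\N Y]$) cancel, and likewise for $\N_p X_{*p}(\N_p Y_p)$, $\N_p^{2}(X_{*p}Y_p)$, and $\N_p^{2}(Y_{*p}X_p)$. What survives are the four terms $\N_p(\N_{*p}X_p)(Y_p)$, $-\N_p(\N_{*p}Y_p)(X_p)$, $(\N_{*p}(\N_pY_p))(X_p)$, and $-(\N_{*p}(\N_pX_p))(Y_p)$, which recombine precisely into the formula in the statement. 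Since this expression depends only on the values $X_p,Y_p$ and on the $1$-jet of $\N$ at $p$, it is independent of the extensions of $X_p,Y_p$ to vector fields; that is, $\Omega_{\N}$ is a tensor, and the displayed local expression holds in every chart. I expect the hard part to be purely organizational, namely the careful bookkeeping of the eight Leibniz terms needed to confirm that they cancel exactly; there is no genuine analytic subtlety beyond the smoothness of the bilinear evaluation map already noted.
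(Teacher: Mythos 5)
Your proposal is correct and follows essentially the same route as the paper's own proof: work in a chart with principal parts, use the local bracket formula $[X,Y]_p=Y_{*p}(X_p)-X_{*p}(Y_p)$ together with the Leibniz rule $D(\N X)_p(h)=(\N_{*p}h)(X_p)+\N_p(X_{*p}h)$ coming from the bounded bilinear evaluation $\mathcal B(E)\times E\to E$, expand the four terms, and observe that all terms involving $X_{*p}$ or $Y_{*p}$ cancel in exactly the pairs you list. The surviving four terms match the stated formula, so there is nothing to add.
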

\begin{proof}
First we note that for the local expressions of $X,Y$, we can compute their Lie bracket as  $[X,Y]_p=Y_{*p}(X_p)-X_{*p}(Y_p)$. On the other hand, for the local expression of $\N$ we have $\N_{*p}:E \to \mathcal B(E)$, and since $(\N,X)\mapsto \N X$ is bilinear, we can use the Leibniz rule to compute 
\[
(\N X)_{*p}v=(\N_{*p}v)(X_p)+\N_p( X_{*p}v) \qquad \forall v\,\in E.
\]
Then 
\[
[\N X,Y]_p= Y_{*p}( \N_p X_p)- (\N_{*p}Y_p)(X_p)-\N_p( X_{*p}Y_p) 
\]
while 
\[
[X,\N Y]_p=(\N_{*p}X_p)(Y_p)+\N_p( Y_{*p}X_p)-X_{*p}(\N_p Y_p) 
\]
and
\begin{align*}
[\N X,\N Y]_p   = & \big(\N_{*p}(\N_p X_p)\big)(Y_p) + \N_p\big( Y_{*p}(\N_p X_p) \big)  \\
& \;  -\big(\N_{*p}(\N_p Y_p)\big)(X_p) - \N_p\big( X_{*p}(\N_p Y_p) \big).
\end{align*}
It follows that
\begin{align*}
\Omega_{\N}(X,Y)&=\N [\N X,Y]+\N [X,\N Y]-[\N X,\N Y]-\N^{\; 2}[X,Y]\\
&= \N_p \big(Y_{*p}( \N_p X_p)\big) - \N_p \big((\N_{*p}Y_p)(X_p)\big) + \N_p \big((\N_{*p}X_p)(Y_p)\big) - \N_p \big(X_{*p}(\N_p Y_p) \big)\\
&-\big(\N_{*p}(\N_p X_p)\big)(Y_p) - \N_p\big( Y_{*p}(\N_p X_p) \big) + \big(\N_{*p}(\N_p Y_p)\big)(X_p) + \N_p\big( X_{*p}(\N_p Y_p) \big)\\
\end{align*}
The stated formula follows after cancelling out terms.
\end{proof}

We now return to the homogeneous structures to give a local/global expression of the torsion of $\N$.

\begin{remark}[Exponential map of $G$]Let $V\subset \mathfrak g$ be a $0$-neighbourhood such that $\exp|_V:V\to U=\exp(V)$ is a diffeomorphism. Recall the formula for the differential of the exponential map
\begin{equation}\label{diff_exp}
\exp_{*z}x=L_{e^z}F(\ad_z)x=R_{e^z}f(\ad_z)x,
\end{equation}
where $F(\lambda)=(1-e^{-\lambda})/\lambda$ and $f(\lambda)=e^{\lambda}F(\lambda)$. The proof of these formulas for finite dimensional Lie groups can be found in Helgason's book  \cite[Chapter IV, Theorem 4.1]{helgason78}; for a proof adapted to the Banach setting see for instance \cite[Appendix A]{tumpach-larotonda}.

From now on we denote by $L_g$ the differential $(\ell_g)_{*h}$ at any $h\in G$, for short. We note that
\[
F(\lambda)=1-\tfrac{1}{2}\lambda+O(\lambda^2)\quad \textrm{ while } \quad f(\lambda)=1+\tfrac{1}{2}\lambda+O(\lambda^2).
\]
\end{remark}

\medskip

\begin{lemma}
For an admissible operator $\I\in \lgk$ and $v\in\mathfrak g$ consider the right invariant vector fields $X^v$ (Definition \ref{invf}) and the vector fields $X^{v,\I}$ defined as 
\[X^{v,\I}(g)=L_g \I\Ad_g^{-1}v.\]
Then one has
\begin{enumerate}
    \item $L_g^{-1}[X^{v,\I},X^w](g)=-\I[v_0,w_0]$.
     \item $L_g^{-1}[X^{v,\I},X^{w,\I}](g)=[ \I v_0,\I w_0]-\I[v_0,\I w_0]-\I[\I v_0,w_0]$,
     \end{enumerate}
where $v,w\in\mathfrak g$,  $v_0=\Ad_g^{-1}v$ and $w_0=\Ad_g^{-1}w$.
    \end{lemma}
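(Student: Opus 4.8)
The plan is to compute both Lie brackets directly in the exponential chart of $G$ centered at an arbitrary point $g$, exploiting the tensorial formula for Lie brackets of vector fields in local coordinates. Since $X^w(g)=L_g w_0$ with $w_0=\Ad_g^{-1}w$ (recall $X^w(g)=R_g w = L_g\Ad_g^{-1}w$), and $X^{v,\I}(g)=L_g\I\Ad_g^{-1}v=L_g\I v_0$, both fields are of the common form $g\mapsto L_g\, T(\Ad_g^{-1}u)$ for a bounded operator $T$ and a fixed vector $u$. I would first obtain a clean local expression for such a field and its derivative. Working in the left chart $z\mapsto g e^{z}$ (equivalently pulling back by $\exp$ and translating by $L_g$), the left-trivialized value $L_{ge^{z}}^{-1}X(ge^z)$ becomes a smooth $\mathfrak g$-valued function of $z$ whose value and first-order $z$-derivative at $z=0$ are all that the bracket formula requires, since by Theorem \ref{onlyp} the bracket is tensorial in the derivatives at the base point of the chart.

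The key computational input is the derivative at $z=0$ of $z\mapsto \Ad_{ge^z}^{-1}u=\Ad_{e^z}^{-1}\Ad_g^{-1}u=e^{-\ad_z}u_0$ (writing $u_0=\Ad_g^{-1}u$), which is $-\ad_z u_0=-[z,u_0]=[u_0,z]$. Combined with the differential of the exponential map \eqref{diff_exp} and the Leibniz rule, the left-trivialized bracket $L_g^{-1}[X,Y](g)$ of two such fields $X(h)=L_h\, S(\Ad_h^{-1}a)$, $Y(h)=L_h\, T(\Ad_h^{-1}b)$ reduces to an algebraic expression in $S,T$ and the brackets of $a_0=\Ad_g^{-1}a$, $b_0=\Ad_g^{-1}b$. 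First I would derive this master formula once; then claim (1) is the case $S=\I$, $a=v$, $T=\id$, $b=w$, and claim (2) is $S=T=\I$, $a=v$, $b=w$. The bracket of \emph{left-invariant} fields $h\mapsto L_h c$ contributes with the standard sign $+[a_0,b_0]$ (as opposed to the right-invariant sign in Proposition \ref{prop:pirel}), and the correction terms come precisely from the $z$-dependence of $\Ad_{ge^z}^{-1}$ that I differentiated above.

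For claim (1), the field $X^w$ is genuinely left-invariant in the sense that $L_g^{-1}X^w(ge^z)=\Ad_{ge^z}^{-1}w$; differentiating the product of the two trivialized fields and assembling the tensorial bracket, the $\I$-independent parts that would give $+\I[v_0,w_0]$ must combine with the $\Ad$-derivative contributions so that, after applying $\I$ (which is what the first field carries), everything collapses to $-\I[v_0,w_0]$. Claim (2) is the genuinely quadratic case: both fields carry an $\I$, so the derivative of each trivialized field produces an $\I[\cdot,\cdot]$ term, while the bracket of the leading values produces the $[\I v_0,\I w_0]$ term, giving the stated combination $[\I v_0,\I w_0]-\I[v_0,\I w_0]-\I[\I v_0,w_0]$.

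I expect the main obstacle to be bookkeeping rather than conceptual: keeping the left-trivialization consistent, correctly tracking the sign conventions (the paper works with right-invariant fields yet brackets them in a left chart, so the sign in Proposition \ref{prop:pirel} and the sign here must be reconciled), and ensuring that the $F(\ad_z)$, $f(\ad_z)$ factors from \eqref{diff_exp} contribute only at first order — their higher-order terms drop out because the bracket depends only on values and first derivatives at $z=0$, where $F(0)=f(0)=1$. Admissibility of $\I$ is not needed for these two identities (they are computations in $\mathfrak g$, not in the quotient), so I would not invoke it here; it will enter only when these left-trivialized expressions are pushed down by $\pi_{*g}$ to the torsion on $G/K$ in the subsequent proof of Theorem \ref{torsionJ}.
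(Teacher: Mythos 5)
Your proposal is correct and takes essentially the same route as the paper: the paper also works in the exponential chart $z\mapsto ge^{z}$, uses \eqref{diff_exp} to expand the local expressions $F(\ad_z)^{-1}\I\bigl(e^{-\ad_z}\Ad_g^{-1}v\bigr)$ and $f(\ad_z)^{-1}\Ad_g^{-1}v$ to first order at $z=0$, and assembles the bracket from values and first derivatives there, exactly as you describe (your single ``master formula'' for fields $h\mapsto L_h\,T(\Ad_h^{-1}u)$, specialized to $T=\id$ or $T=\I$, merely unifies the paper's two parallel computations, and your remark that admissibility of $\I$ is not used here also matches the paper). One small correction: the fact that the chart bracket depends only on the $1$-jets of the fields at the point is just the local formula $[X,Y]_p=Y_{*p}(X_p)-X_{*p}(Y_p)$, not Theorem \ref{onlyp}, which asserts tensoriality of the Nijenhuis torsion --- the Lie bracket itself is not a tensor.
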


\begin{proof}
We use the exponential chart $(gU,\varphi)$ around $g\in G$ to compute the Lie brackets, i.e. $\varphi: gU \rightarrow V\subset \mathfrak g$ and  $\varphi^{-1}(z)=ge^z$ for $z\in V$. Consider the local expressions $\overline{X}^{v,\I}$, $\overline{X}^v$ of $X^{v,\I}$, $X^v$ in this chart, i.e.
\[
\overline{X}^v(z)=\varphi_{*ge^z}X^v(ge^z), \quad \overline{X}^{v,\I}(z)=\varphi_{*ge^z}X^{v,\I}(ge^z),
%\quad \overline{Y}^{\N}(z)=\varphi_{*ge^z}Y^{\N}(ge^z), 
\]
for $z\in V$. We note that  $L_{ge^z}=L_gL_{e^z}$ and $R_{ge^z}=R_{e^z}R_{e^g}$, and that $L$ commutes with $R$. Differentiating the identity $\varphi(ge^z)=z$ we obtain $\varphi_{*ge^z}(l_g)_{*e^z}\exp_{*z}=\id_{\mathfrak g}$ or equivalently using the formula \eqref{diff_exp}
\[
\varphi_{*ge^z}=F(\ad_z)^{-1}L_{e^z}^{-1}L_g^{-1}=f(\ad_z)^{-1}R_{e^z}^{-1}L_g^{-1}=f(\ad_z)^{-1}L_g^{-1}R_{e^z}^{-1}
\]
by the previous remark. Then plugging in the formula from Definition \ref{invf} we get
\[
\overline{X}^v(z)=f(\ad_z)^{-1}L_g^{-1}R_gv=f(\ad_z)^{-1} \Ad_g^{-1}v
\]
or equivalently $f(\ad_z)\overline{X}^v(z)=\Ad_g^{-1}v$. Replacing $z$ with $tz$ we see that 
\[
\overline{X}^v(tz)+ \tfrac{1}{2}[tz, \overline{X}^v(tz)]+ O(t^2)=\Ad_g^{-1}v.
\]
Then by differentiating at $t=0$ we get
\[
\overline{X}^v_{*0}(z)= -\tfrac{1}{2}[z,\overline{X}^v(0)] = -\tfrac{1}{2}[z,\Ad_g^{-1}v].
\]
Applying the same approach to $X^{v,\I}$ we get 
\[
\overline{X}^{v,\I}(z)=F(\ad_z)^{-1}\I(\Ad_{e^z}^{-1}\Ad_g^{-1}v)
\]
or equivalently $F(\ad_z)\overline{X}^{v,\I}(z)=\I(e^{-\ad_z}\Ad_g^{-1}v)$. Then analogously to the previous case we obtain
\[
\overline{X}^{v,\I}(tz)-\tfrac{1}{2}[tz,\overline{X}^{v,\I}(tz)]+O(t^2)=\I\Ad_g^{-1}v- \I[tz,\Ad_g^{-1}v]+O(t^2).
\]
Thus again differentiating at $t=0$ we arrive at
\[
\overline{X}^{v,\I}_{*0}(z)=\tfrac{1}{2}[z,\overline{X}^{v,\I}(0)]-\I[z,\Ad_g^{-1}v]=\tfrac{1}{2}[z,\I\Ad_g^{-1}v]-\I[z,\Ad_g^{-1}v].
\]
Now we compute $[\overline{X}^{v,\I},\overline{X}^w](0)= \overline{X}^w_{*0}(\overline{X}^{v,\I}_0)- \overline{X}^{v,\I}_{*0}(\overline{X}^w_0)$ which gives us 
\begin{align*}
    [\overline{X}^{v,\I},\overline{X}^w](0)=&-\tfrac{1}{2}[\I\Ad_g^{-1}v,\Ad_g^{-1}w] -\tfrac{1}{2}[\Ad_g^{-1}w,\I\Ad_g^{-1}v]+\I[\Ad_g^{-1}w,\Ad_g^{-1}v]\\
    & = \I[\Ad_g^{-1}w,\Ad_g^{-1}v]=\I[w_0,v_0].
\end{align*}
Applying $\varphi^{-1}_{*0}=L_g$ gives the first formula of the lemma. The other  Lie bracket 
\[
[ \overline{X}^{v,\I}  ,  \overline{X}^{w,\I} ](0) = \overline{X}^{w,\I}_{*0}(\overline{X}^{v,\I}_0)- \overline{X}^{v,\I}_{*0}(\overline{X}^{w,\I}_0)
\]
can be computed in a similar fashion in order to arrive at the second formula of the lemma.
\end{proof}

\medskip

\begin{lemma}\label{nihenG}Let $v,w\in\mathfrak g$ and consider the projected vector fields $\widetilde{X^v},\widetilde{X^w}$ in $G/K$ (Definition \ref{invf}). Then at $p=\pi(g)$, we have
$$
\N[\widetilde{X^v}, \N\widetilde{X^w}](p)+\N[\N\widetilde{X^v}, \widetilde{X^w}](p)=-2(\alpha_g)_*\pi_{*1}\I^2[v_0,w_0]
$$
and 
$$
[\N\widetilde{X^v}, \N\widetilde{X^w}](p)=(\alpha_g)_*\pi_{*1}\big([\I v_0,\I w_0]-\I[v_0,\I w_0]-\I[\I v_0,w_0]\big),
$$
where $v_0=\Ad_g^{-1}v$ and $w_0=\Ad_g^{-1}w$.
\end{lemma}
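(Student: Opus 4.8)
The plan is to reduce both identities to the bracket computations of the preceding lemma by exhibiting vector fields on $G$ that are $\pi$-related to the fields appearing downstairs. Recall from Proposition \ref{prop:pirel} that $\widetilde{X^v}$ is $\pi$-related to the right-invariant field $X^v$. The crucial observation is that $\N\widetilde{X^v}$ is $\pi$-related to $X^{v,\I}$: indeed, the computation in Remark \ref{notrel} gives $\N_p\widetilde{X^v}(p) = (\alpha_g)_*\pi_{*1}\I\Ad_g^{-1}v$ for $p=\pi(g)$, and using the identity $(\alpha_g)_*\pi_{*1} = \pi_{*g}L_g$ from Lemma \ref{derive}(1) this equals $\pi_{*g}L_g\I\Ad_g^{-1}v = \pi_{*g}X^{v,\I}(g)$, which is exactly the $\pi$-relatedness condition. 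Here it is essential that $\I$ is admissible, so that $\N$ is well-defined and the right-hand side is independent of the chosen representative $g$.

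With these relations in hand, I would apply the naturality formula \eqref{lieb} for brackets of $\pi$-related fields. For the first identity, $\widetilde{X^v}$ and $\N\widetilde{X^w}$ are $\pi$-related to $X^v$ and $X^{w,\I}$, so $[\widetilde{X^v},\N\widetilde{X^w}](p)=\pi_{*g}[X^v,X^{w,\I}](g)$, and similarly $[\N\widetilde{X^v},\widetilde{X^w}](p)=\pi_{*g}[X^{v,\I},X^w](g)$. Item (1) of the preceding lemma, together with the antisymmetry of both brackets, evaluates each of these upstairs brackets as $-L_g\I[v_0,w_0]$ with $v_0=\Ad_g^{-1}v$ and $w_0=\Ad_g^{-1}w$; pushing forward and rewriting $\pi_{*g}L_g=(\alpha_g)_*\pi_{*1}$ gives $-(\alpha_g)_*\pi_{*1}\I[v_0,w_0]$ for each. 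For the second identity, $\N\widetilde{X^v}$ and $\N\widetilde{X^w}$ are $\pi$-related to $X^{v,\I}$ and $X^{w,\I}$, so item (2) of the preceding lemma yields $[\N\widetilde{X^v},\N\widetilde{X^w}](p)=\pi_{*g}L_g\big([\I v_0,\I w_0]-\I[v_0,\I w_0]-\I[\I v_0,w_0]\big)$, which is the claimed expression.

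It remains to apply $\N$ in the first identity. Writing $\N_p=(\alpha_g)_*\N_{p_0}(\alpha_g)_*^{-1}$ and using \eqref{Np0} in the form $\N_{p_0}\pi_{*1}x=\pi_{*1}\I x$, I obtain $\N_p\big(-(\alpha_g)_*\pi_{*1}\I[v_0,w_0]\big)=-(\alpha_g)_*\pi_{*1}\I^2[v_0,w_0]$ for each of the two bracket terms, and adding them produces the factor $-2$. I expect the main obstacle to be purely bookkeeping: keeping the signs straight when reindexing item (1) of the preceding lemma (one of the two brackets requires both the swap $v\leftrightarrow w$ and the antisymmetry of the Lie bracket of vector fields), and verifying carefully that the $\pi$-relatedness of $\N\widetilde{X^v}$ with $X^{v,\I}$ holds globally on $G$ — this last point being exactly where admissibility of $\I$ enters and where a naive argument could fail.
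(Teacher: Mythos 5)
Your proposal is correct and follows essentially the same route as the paper's proof: establishing that $\N\widetilde{X^v}$ is $\pi$-related to $X^{v,\I}$ via Lemma \ref{derive}(1), invoking \eqref{lieb} together with the bracket formulas of the preceding lemma (with the same sign bookkeeping under $v\leftrightarrow w$ and bracket antisymmetry), and finally applying $\N_p=(\alpha_g)_*\N_{p_0}(\alpha_g)_*^{-1}$ with $\N_{p_0}\pi_{*1}x=\pi_{*1}\I x$ to produce the factor $-2$. The only cosmetic difference is that you reach the $\pi$-relatedness by reading the computation of Remark \ref{notrel} backwards, whereas the paper computes $\pi_{*g}X^{v,\I}(g)$ directly, which amounts to the same identity.
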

\begin{proof}We first note that the vector fields $\N\widetilde{X^v}$ are $\pi$-related to the vector fields $X^{v,\I}$ of the previous lemma i.e. for any $g\in G$
\begin{align*}
\pi_{*g}X^{v,\I}(g) & =\pi_{*g}L_g\I\Ad_g^{-1}v=(\alpha_g)_*\N_{p_0}\pi_{*1}\Ad_g^{-1}v\\
&=(\alpha_g)_*\N_{p_0}(\alpha_g)_*^{-1} \widetilde{X^v}(p)=\N_p\widetilde{X^v}(p)
\end{align*}
by Lemma \ref{derive}(1). 
%Likewise, $\N\widetilde{X^w}$ is $\pi$-related with $X^{w,N}$ of the previous lemma. 
We also recall that $\widetilde{X^v}$ are $\pi$-related with $X^v$
%while $\widetilde{X^w}$ is $\pi$-related with $X^w(g)=R_gw$ 
by Proposition \ref{prop:pirel}. Thus again by Lemma \ref{derive}(1), the previous lemma, and equality \eqref{lieb} we obtain
\begin{align*}
[\N\widetilde{X^v}, \widetilde{X^w}](p)  =& \pi_{*g}[X^{v,\I},X^w](g)=(\alpha_g)_*\pi_{*1}L_g^{-1}[X^{v,\I},X^w](g) \\
=& - (\alpha_g)_*\pi_{*1} \I[v_0,w_0].
\end{align*}
By reversing the bracket and exchanging $v,w$ we also get $
[\widetilde{X^v}, \N\widetilde{X^w}](p)=-(\alpha_g)_*\pi_{*1}\I[v_0,w_0]$. Thus summing and applying $\N_p=(\alpha_g)_* \N_{p_0}(\alpha_g)_*^{-1}$, with $\N_{p_0}\pi_{*1}v:=\pi_{*1}\I v$,  we obtain the first formula  of  the lemma. To compute the second bracket we use the previous lemma and the result is straightforward.
\end{proof}

\bigskip

\begin{theorem}\label{torsionJ} Assume $G/K$ is equipped with a homogeneous vector bundle map $\N$ induced by $\I\in\lgk$. Let $X,Y$ be vector fields on $G/K$. Let $p=\pi(g)$ and take $v,w\in \mathfrak g$ such that $X(p)=(\alpha_g)_*\pi_{*1}v$ and $Y(p)=(\alpha_g)_*\pi_{*1}w$. Then the Nijenhuis torsion of $\N$ can be expressed as
$$
\Omega_{\N}(X,Y)(p)=(\alpha_g)_*\pi_{*1}\big(\I[v,\I w]+\I[\I v,w]-[\I v,\I w]-\I^2[v,w]\big).
$$
In particular $\N$ is a Nijenhuis operator in $G/K$, i.e. $\Omega_{\N}\equiv 0$, if and only if
\begin{equation}\label{localtor}
\I[v,\I w]+\I[\I v,w]-[\I v,\I w]-\I^2[v,w]\in \mathfrak k
\end{equation}
for all $v,w\in\mathfrak g$.
\end{theorem}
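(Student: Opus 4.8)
The plan is to use Theorem \ref{onlyp}, which tells us that $\Omega_{\N}$ is a tensor (depends only on the pointwise values of $X$ and $Y$ at $p$), together with the explicit bracket computations gathered in Lemma \ref{nihenG}. The tensoriality is the conceptual cornerstone: it means that to evaluate $\Omega_{\N}(X,Y)(p)$ for \emph{arbitrary} vector fields $X,Y$ with prescribed values at $p$, I am free to replace them by any convenient vector fields sharing those values. The natural choice is the projected vector fields $\widetilde{X^v}$ and $\widetilde{X^w}$ of Definition \ref{invf}, because $\N$ and the Lie brackets of these fields have already been computed in closed form.

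\medskip

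First I would fix $p=\pi(g)$ and, given $v,w\in\mathfrak g$ with $X(p)=(\alpha_g)_*\pi_{*1}v$ and $Y(p)=(\alpha_g)_*\pi_{*1}w$, choose auxiliary elements so that the projected fields match $X,Y$ at $p$. Concretely, since $\widetilde{X^{u}}(p)=(\alpha_g)_*\pi_{*1}\Ad_g^{-1}u$ by \eqref{Xtilda}, I set $u=\Ad_g v$ so that $\widetilde{X^{\Ad_g v}}(p)=(\alpha_g)_*\pi_{*1}v=X(p)$, and similarly for $w$; this way the auxiliary ``$v_0$'' and ``$w_0$'' appearing in Lemma \ref{nihenG} become exactly $\Ad_g^{-1}(\Ad_g v)=v$ and $w$. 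Then by tensoriality I compute $\Omega_{\N}(X,Y)(p)=\Omega_{\N}(\widetilde{X^{\Ad_g v}},\widetilde{X^{\Ad_g w}})(p)$, and expand the four terms of $\Omega_{\N}$ using Definition \ref{torJ}.

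\medskip

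The two nontrivial terms $\N[\widetilde{X^{\Ad_g v}},\N\widetilde{X^{\Ad_g w}}]+\N[\N\widetilde{X^{\Ad_g v}},\widetilde{X^{\Ad_g w}}]$ and $[\N\widetilde{X^{\Ad_g v}},\N\widetilde{X^{\Ad_g w}}]$ are precisely the two quantities evaluated in Lemma \ref{nihenG}, giving $-2(\alpha_g)_*\pi_{*1}\I^2[v,w]$ and $(\alpha_g)_*\pi_{*1}([\I v,\I w]-\I[v,\I w]-\I[\I v,w])$ respectively. The remaining term $\N^2[\widetilde{X^{\Ad_g v}},\widetilde{X^{\Ad_g w}}]$ is handled using Proposition \ref{prop:pirel}, which yields $[\widetilde{X^{\Ad_g v}},\widetilde{X^{\Ad_g w}}]=-\widetilde{X^{[\Ad_g v,\Ad_g w]}}$; applying $\N^2$ at $p$ and using $\N_{p_0}\pi_{*1}=\pi_{*1}\I$ twice together with $\Ad_g^{-1}[\Ad_g v,\Ad_g w]=[v,w]$ produces $(\alpha_g)_*\pi_{*1}\I^2[v,w]$. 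Assembling all four contributions with the correct signs from Definition \ref{torJ}, the $\I^2[v,w]$ terms combine ($-2+1=-1$), and the result collapses to
$$
\Omega_{\N}(X,Y)(p)=(\alpha_g)_*\pi_{*1}\big(\I[v,\I w]+\I[\I v,w]-[\I v,\I w]-\I^2[v,w]\big),
$$
as claimed.

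\medskip

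For the final equivalence, I would argue that since $(\alpha_g)_*$ is an isomorphism of tangent spaces and $\ker\pi_{*1}=\mathfrak k$ by Lemma \ref{derive}(5), the vanishing of $\Omega_{\N}(X,Y)(p)$ for all $X,Y$ and all $p$ is equivalent to $\I[v,\I w]+\I[\I v,w]-[\I v,\I w]-\I^2[v,w]\in\mathfrak k$ for all $v,w\in\mathfrak g$. \textbf{The main obstacle I anticipate} is bookkeeping: making sure the auxiliary elements are chosen so that the $v_0,w_0$ of Lemma \ref{nihenG} reduce cleanly to $v,w$, and tracking the signs and the coefficient arithmetic on the $\I^2[v,w]$ terms coming from three different sources (the first Lemma \ref{nihenG} formula, the $[\N\cdot,\N\cdot]$ formula, and the $\N^2[\cdot,\cdot]$ term). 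Everything else is a direct substitution once tensoriality licenses the replacement of general fields by projected ones.
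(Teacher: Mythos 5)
Your proposal is correct and follows essentially the same route as the paper's own proof: invoke the tensoriality of $\Omega_{\N}$ (Theorem \ref{onlyp}) to replace $X,Y$ by the projected fields $\widetilde{X^{\Ad_g v}},\widetilde{X^{\Ad_g w}}$ (so that $v_0,w_0$ in Lemma \ref{nihenG} reduce to $v,w$), read off the two mixed terms from Lemma \ref{nihenG}, and compute $\N^{2}[\widetilde X,\widetilde Y](p)=-(\alpha_g)_*\pi_{*1}\I^{2}[v,w]$ via the bracket $-(\alpha_g)_*\pi_{*1}[v,w]$ (which the paper obtains directly from Lemma \ref{derive}(2), equivalently from your Proposition \ref{prop:pirel}). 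Your sign and coefficient bookkeeping on the $\I^{2}[v,w]$ terms, and the concluding reduction using that $(\alpha_g)_*$ is an isomorphism and $\ker\pi_{*1}=\mathfrak k$, match the paper's argument exactly.
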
 
\begin{proof}
Since the value of the torsion tensor depends only on the values of the vector fields at the considered point (Theorem \ref{onlyp}), we fix $g$ and we replace $X,Y$ with the projected vector fields with speeds $\Ad_g v$ and $\Ad_g w$ respectively (i.e. $\widetilde{X}(\pi(h))=\widetilde{X^{\Ad_g v}}(\pi(h))=(\alpha_h)_* \pi_{*1}\Ad_h^{-1}\Ad_gv$ and likewise $\widetilde{Y}=\widetilde{X^{\Ad_g w}}$, as in Definition \ref{invf}). Then we compute the torsion $\Omega_{\N}$ of these two vector fields, and almost all the computations were done in the previous lemmas. We only need to add that their Lie bracket is
$$
[\widetilde X,\widetilde Y](p)=\pi_{*g} R_g [\Ad_g w, \Ad_g v]=-\pi_{*g}R_g\Ad_g[v,w]=-(\alpha_g)_* \pi_{*1}[v,w]
$$
by means of Lemma \ref{derive}(2). Therefore 
\[
\N^{\; 2}[\widetilde X,\widetilde Y](p)=-(\alpha_g)_* \pi_{*1}\I^2[v,w],
\]
which then cancels out one of the brackets in Lemma \ref{nihenG}.
\end{proof}

\begin{remark}
The expression in \eqref{localtor} is actually the value at identify of the Nijenhuis torsion of the left-invariant bundle map $TG\to TG$ defined by $\I$.
\end{remark}

\begin{corollary}\label{oneof}If either $v$ or $w$ belong to $\mathfrak k$, then \eqref{localtor} is automatically fulfilled for admissible $\I$. If there exists a linear complement $\mathfrak m$ of $\mathfrak k$, it suffices to check  \eqref{localtor} for $v,w\in \mathfrak m$.
\end{corollary}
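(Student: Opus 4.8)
The plan is to treat the left-hand side of \eqref{localtor} as a bilinear map on $\mathfrak g\times\mathfrak g$,
\[
T(v,w):=\I[v,\I w]+\I[\I v,w]-[\I v,\I w]-\I^2[v,w],
\]
and to exploit two features of admissible operators. First, $T$ is manifestly bilinear, since each summand is bilinear in $(v,w)$ because $\I$ is linear and $[\cdot,\cdot]$ is bilinear, and it is antisymmetric: replacing each bracket by its reverse shows that swapping $v$ and $w$ sends $T(v,w)$ to $-T(v,w)$, exactly as for $\Omega_{\N}$. Because of antisymmetry, $T(w,v)=-T(v,w)$ lies in $\mathfrak k$ whenever $T(v,w)$ does, so for the first claim it suffices to treat the case $v=z\in\mathfrak k$ with $w\in\mathfrak g$ arbitrary.

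The two tools I would use are the defining property $\I\mathfrak k\subset\mathfrak k$ and the infinitesimal commutation relation \eqref{derivcom}, namely $\I[z,u]-[z,\I u]\in\mathfrak k$ for every $z\in\mathfrak k$ and $u\in\mathfrak g$; the latter holds for any admissible $\I$ by the differentiation argument in the first part of Remark \ref{connected}, and this direction requires no connectedness of $K$. Writing $\equiv$ for congruence modulo $\mathfrak k$, I would group the four terms of $T(z,w)$ into two pairs. For the pair $\I[z,\I w]-\I^2[z,w]$, relation \eqref{derivcom} gives $[z,\I w]\equiv\I[z,w]$, and applying $\I$, which preserves $\mathfrak k$, yields $\I[z,\I w]\equiv\I^2[z,w]$, so this pair lies in $\mathfrak k$. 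For the pair $\I[\I z,w]-[\I z,\I w]$, note that $\I z\in\mathfrak k$ by admissibility, so \eqref{derivcom} applied with $\I z$ in place of $z$ gives $[\I z,\I w]\equiv\I[\I z,w]$, and this pair lies in $\mathfrak k$ as well. Adding the two pairs shows $T(z,w)\in\mathfrak k$, which is the first assertion.

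For the second assertion, suppose $\mathfrak g=\mathfrak k\oplus\mathfrak m$ and decompose $v=v_{\mathfrak k}+v_{\mathfrak m}$ and $w=w_{\mathfrak k}+w_{\mathfrak m}$ accordingly. Bilinearity of $T$ expands $T(v,w)$ into $T(v_{\mathfrak k},w_{\mathfrak k})+T(v_{\mathfrak k},w_{\mathfrak m})+T(v_{\mathfrak m},w_{\mathfrak k})+T(v_{\mathfrak m},w_{\mathfrak m})$, and by the first assertion the first three summands lie in $\mathfrak k$ because each has at least one argument in $\mathfrak k$. Hence $T(v,w)\in\mathfrak k$ if and only if $T(v_{\mathfrak m},w_{\mathfrak m})\in\mathfrak k$, so \eqref{localtor} need only be verified on $\mathfrak m\times\mathfrak m$. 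The only point demanding care is the correct use of \eqref{derivcom}: it is an identity \emph{modulo} $\mathfrak k$ rather than on the nose, so one must track that $\I$ sends the resulting $\mathfrak k$-valued error terms back into $\mathfrak k$. Beyond this bookkeeping the argument is elementary, and I would not expect any genuine obstacle once Theorem \ref{torsionJ} is in hand.
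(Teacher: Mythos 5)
Your proposal is correct and takes essentially the same route as the paper's proof: antisymmetry of the expression reduces the first claim to $v\in\mathfrak k$, the same two pairings (first term with fourth, second with third) are cancelled modulo $\mathfrak k$ using \eqref{derivcom} together with $\I\mathfrak k\subset\mathfrak k$, and the split case follows from bilinearity. Your observation that \eqref{derivcom} follows from admissibility by differentiation alone, without any connectedness assumption on $K$, is also accurate and consistent with Remark \ref{connected}.
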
 
\begin{proof}
By the anti-symmetry in $v,w$ of \eqref{localtor}, it suffices to verify the first claim for $v\in\mathfrak k$. By equation~\eqref{derivcom} $\I$ commutes with the adjoint action by elements in $\mathfrak k$ modulo $\mathfrak k$
and we have
\[
\I [v,\I w ]= \I^2 [v,w] + k_1,
\]
for some $k_1\in\mathfrak k$, hence the first term in equation~\eqref{localtor} cancels with the fourth. Since $\I v\in\mathfrak k$ also, we have
\[
\I [\I v, w]=[\I v, \I w] + k_2
\]
for some $k_2\in\mathfrak k$,
and the second term cancels with the third one. Now if $\mathfrak g=\mathfrak m\oplus \mathfrak k$, by the bilinearity of the torsion $\Omega_\N$ and the previous claim, the second claim follows.
\end{proof}

\begin{corollary}\label{cor:ad_nij}
In the case when $\I$ is defined as $\ad_d$ for some $d\in \mathfrak g$, it is an admissible operator (see Definition~\ref{homj}) if and only if for all $v\in\mathfrak g$ and $k\in\mathfrak k$
\[ [k,d] \in \mathfrak k\] and 
\[ [v,[k,d]] \in \mathfrak k.\]
The condition \eqref{localtor} in this case simplifies to
\[ \big[ [d,v] , [d,w] \big] \in \mathfrak k\]
for all $v,w\in\mathfrak g$.
\begin{proof}
Simply expand \eqref{localtor} and apply the Jacobi identity.
\end{proof}
\end{corollary}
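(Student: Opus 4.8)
The statement packages two logically independent assertions, and both are ultimately consequences of the Jacobi identity applied to $\I=\ad_d$. The first is a characterization of when $\ad_d$ is admissible in the sense of Definition \ref{homj}; the second is the collapse of the general torsion expression \eqref{localtor} to a single bracket. The plan is to treat them separately, in each case substituting $\I v=[d,v]$ and reorganizing via the Jacobi identity.

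For the admissibility part, the two defining conditions of $\lgk$ translate directly. The condition $\I\mathfrak k\subset\mathfrak k$ reads $[d,k]\in\mathfrak k$ for all $k\in\mathfrak k$, which is the same as $[k,d]\in\mathfrak k$ and gives the first displayed requirement. For the second condition, the natural move is to pass to its infinitesimal form \eqref{derivcom}, i.e. $[z,\I v]-\I[z,v]\in\mathfrak k$ for all $z\in\mathfrak k$, $v\in\mathfrak g$. Substituting $\I=\ad_d$ turns the left-hand side into $[z,[d,v]]-[d,[z,v]]$, and a single application of the Jacobi identity $[z,[d,v]]=[[z,d],v]+[d,[z,v]]$ collapses this to $[[z,d],v]$; membership in $\mathfrak k$ is then exactly $[v,[k,d]]\in\mathfrak k$, the second displayed requirement. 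The one point needing care here is the equivalence between the infinitesimal form \eqref{derivcom} and the full condition $\Ran(\Ad_k\I-\I\Ad_k)\subset\mathfrak k$: the forward implication always holds by differentiation, while the converse is where I would invoke Remark \ref{connected} (so the ``if'' direction of this part rests on the connectedness argument recorded there).

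For the simplification of \eqref{localtor}, I would set $a=[d,v]$ and $b=[d,w]$ and expand the four terms $\I[v,\I w]=[d,[v,b]]$, $\I[\I v,w]=[d,[a,w]]$, $[\I v,\I w]=[a,b]$, and $\I^2[v,w]=[d,[d,[v,w]]]$. Applying Jacobi to each nested double bracket rewrites $[d,[v,b]]=[a,b]+[v,[d,b]]$ and $[d,[a,w]]=[[d,a],w]+[a,b]$, while the iterated term expands as $[d,[d,[v,w]]]=[[d,a],w]+2[a,b]+[v,[d,b]]$. Summing with the prescribed signs, the $[v,[d,b]]$ and $[[d,a],w]$ contributions cancel in pairs and the $[a,b]$ coefficients add to $-1$, so the entire expression reduces to $-[a,b]=-\big[[d,v],[d,w]\big]$. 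Since $\mathfrak k$ is a subspace, \eqref{localtor} holds if and only if $\big[[d,v],[d,w]\big]\in\mathfrak k$, as claimed. I expect the main obstacle to be purely bookkeeping: keeping the signs and the three families of residual terms straight through the repeated use of Jacobi, since the cancellation is exact and a single sign error would spoil it.
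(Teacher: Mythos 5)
Your proof is correct and follows essentially the same route as the paper's one-line proof: substitute $\I=\ad_d$, expand \eqref{localtor}, and cancel via the Jacobi identity to reduce the torsion expression to $-\big[[d,v],[d,w]\big]$ (your sign bookkeeping checks out, and membership in the subspace $\mathfrak k$ is insensitive to the overall sign). Your explicit caveat that the ``if'' direction of the admissibility claim rests on the connectedness argument of Remark \ref{connected} is consistent with the paper, which relies on that same remark but leaves it implicit in the corollary's statement.
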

\begin{remark}\label{split}
If $K$ is split in $G$, i.e. $\mathfrak g=\mathfrak k\oplus \mathfrak m$, then around each $p\in G/K$ there  exists a  smooth local cross-section $\sigma:U\subset G/K\to G$ for the quotient map (i.e. $\pi\circ\sigma=\id_U$, see  \cite[Theorem 4.19]{beltita2006}). Then the proof of Theorem \ref{torsionJ} can be simplified (following Fr\"olicher \cite{frolicher55} and his proof of his Satz 2 in Section 19, for almost complex structures) by considering the local vector fields on the homogeneous space 
\[ 
\widehat v_p = \pi_{*\sigma(p)}  L_{\sigma(p)}v, \qquad p\in U,\quad v\in \mathfrak g.
\]
It is plain that $\widehat{v}$ is $\pi$-related to the restriction   of the left invariant vector field generated by $v$ to the submanifold $\sigma(U)$, 
and also that 
\[
(\N\, \widehat v)_p=\N_p\widehat v_p= \pi_{*\sigma(p)}  L_{\sigma(p)}\I v = (\widehat{\I v})_p,
\]
i.e. $\N$ exchanges the field induced by $v\in \mathfrak g$ with the one induced by $\I\!v\in \mathfrak g$.

\end{remark}

\section{Almost complex structures}\label{section:almost}

Let us recall that by almost complex structure $\J$ on a manifold $\mathcal M$ we mean a vector bundle map $\J:TM\to TM$ such that $\J^2=-1$. 
Its Nijenhuis torsion defined in Definition \ref{torJ} is
\begin{equation}\label{nitensor}
\Omega_{\J}(X,Y)={\J}\big([{\J}X,Y]+[X,\J Y]\big)-[{\J}X,{\J}Y] + [X,Y],
\end{equation}
where $X$ and $Y$ are vector fields in $\mathcal{M}$, and the bracket $[\cdot, \cdot]$ denotes the bracket of vector fields.

\begin{remark}\label{remnij}
For finite dimensional manifolds, the vanishing of this tensor is equivalent to the integrability of the almost complex structure by the Newlander--Nirenberg theorem \cite{eckmann51,newlander}. In the infinite-dimensional setting, this is not always true. An example of an infinite-dimensional smooth almost complex Banach manifold with a vanishing Nijenhuis tensor, that is not integrable, was given by Patyi in \cite{patyi2000}. However, as was shown in \cite{beltita05integrability} and in the Appendix of \cite{tumpach-phd}, for real-analytic Banach manifolds endowed with real-analytic almost complex structures, the Newlander--Nirenberg theorem reduces to the Frobenius theorem for the eigenspaces of the complex linear extension $\J^{\C}$ of $\J$ to the complex analytic extension of the tangent bundle $T\M^{\C}$ by the same argument as employed in \cite{eckmann51}. It is therefore true in this context. 

The example in \cite{patyi2000} shows that the construction of the complex analytic extension of the tangent bundle $T^{\C}\M$ may not be possible when the structure is only smooth and not real-analytic. The obstruction is the lack of certain properties of PDEs which hold in finite dimensional vector spaces, but are not available in the Banach setting. For context and better explanation of these remarks, see the proof of Malgrange in Nirenberg's lecture notes \cite[Theorem 4]{nirenberg73}. 
\end{remark}

\medskip

\begin{definition}A \emph{homogeneous almost complex structure} is a homogeneous vector bundle map $\J$ in $G/K$ (Definition \ref{hls}) with the additional requirement that  $\J^2=-1$. 
\end{definition}

In our homogeneous setting we are interested in those $\J$ that are induced by admissible linear bounded operators via Definition~\ref{def:homog_struct}.

\begin{definition}\label{admJ}
Consider the following subset of admissible operators on $\mathfrak g$
\[ \lcgk = \{ \V\in \lgk \;|\; \Ran(\V^2+1)\subset \mathfrak k\}.\]
Note that if $\J$ is induced by $\V \in \lcgk$  one has  $\J_{p_0}^2=-1$ in $T_{p_0}(G/K)$, therefore $\J^2=-1$ in the whole tangent bundle $T(G/K)$.
\end{definition}
%Then the torsion of the homogeneous  vector bundle map $\J$ (Definition \ref{torJ}) is exactly the Nijenhuis tensor of $\J$ in \eqref{nitensor}.
\medskip

\begin{remark}\label{more_general_complex}
    In the case when $K$ is a split subgroup of $G$, i.e. when $\mathfrak k$ has a closed complement $\mathfrak m$ in $\mathfrak g$, $\mathfrak g=\mathfrak k\oplus \mathfrak m$,  then one can consider the following subset of $\lcgk$
    \[
\mathcal{I}_{\mathfrak m}(G, K) =\{\I\in \lcgk:  \, \mathfrak k\subset \ker \I;\; \I\mathfrak{m} \subset \mathfrak{m};\; \I_{| \mathfrak m}^2 = - 1_{|\mathfrak m} \}.
\]  
  Note that  $\mathcal{I}_{\mathfrak m}(G, K)$ is strictly contained in  $\lcgk$. By Proposition~12 in \cite{beltita05integrability}, in this complemented case, any almost complex structure on $M = G/K$ is induced by a linear map in $\mathcal{I}_{\mathfrak m}(G, K)$. The non-complemented case is more tricky and, as mentioned in Remark~\ref{lifting} relates to the quotient lifting property of Banach spaces.
\end{remark}

\begin{notation}[Complexification]
Let $\mathfrak g^{\mathbb C}=\mathfrak g\oplus  i\mathfrak g$ be the complexification of the Banach--Lie algebra $\mathfrak g$, and denote by $\mathfrak k^{\mathbb C}$ the complexification of $\mathfrak k$. Relative to the splitting $\mathfrak g^{\mathbb C}=\mathfrak g\oplus  i\mathfrak g$, the complex conjugation maps an element  $x=a+ib\in \mathfrak g$ to its complex conjugate defined by $\overline{x}=a-ib$. For the complexified Lie-bracket, it is plain that
\begin{equation}\label{conjbr}
\overline{[x,y]}=[\overline{x},\overline{y}].
\end{equation}
\end{notation}

We will denote by $\V^{\mathbb{C}}$ the complex linear extension of $\V\in \mathcal B(\mathfrak g)$  to $\mathfrak g^{\mathbb C}$, i.e. $\V^{\mathbb{C}}(a+ib)=\V a+ i \V b$. Note that $\V^{\mathbb{C}}$ is  a  bounded operator on $\mathfrak g^{\mathbb C}$, which satisfies:
\begin{itemize}
\item $\V^{\mathbb{C}}(\overline{x})=\overline{\V^{\mathbb{C}}(x)}$ for any $x\in \mathfrak g^{\mathbb C}$;
\item for $\V\in \lgk$, its complexification $\V^{\mathbb{C}}$ preserves $\mathfrak k^{\mathbb C}$ and we have
\begin{equation}\label{complex-commutation}
\V^{\mathbb{C}}[k, v] = [k, \V^{\mathbb{C}}v]
\end{equation}
for any $k\in \mathfrak{k}^{\mathbb{C}}$ and $v\in \mathfrak{g}^{\mathbb{C}}$;
\item for $\V\in \lcgk$, the following holds $\Ran\big((\V^{\mathbb{C}})^2+1\big)\subset \mathfrak k^{\mathbb C}$.
\end{itemize}

\begin{definition}\label{invs} 
Define the following  two subspaces 
\[
Z_{\pm}=\{v\in \mathfrak g^{\mathbb C}: (\V^{\mathbb{C}} \mp  i)v\in 
 \mathfrak k^{\mathbb C}\},
\]
that is $v\in Z_+$ if $\V v=iv +k$ for some $k\in \mathfrak  k^{\mathbb C}$ and likewise with $Z_-$. Note that $\overline{Z_+}=Z_-$ and that $Z_+$ and $Z_-$ are closed as preimages of the closed subalgebra $\mathfrak k^{\mathbb C}$ by a continuous map. 
\end{definition}
\medskip

\begin{theorem}\label{teob} A homogeneous almost complex structure $\J$ in $G/K$ induced by $\V\in\lcgk$ is Nijenhuis (i.e. the condition $\Omega_{\J}\equiv 0$ holds) if and only if  $[v,w]\in Z_+$ for all $v,w\in Z_+$. 
\end{theorem}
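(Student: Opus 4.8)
The plan is to reduce the statement to an algebraic identity inside $\mathfrak g^{\mathbb C}$ and then exploit the decomposition $\mathfrak g^{\mathbb C}=Z_++Z_-$. By Theorem \ref{torsionJ}, $\J$ is Nijenhuis precisely when the expression in \eqref{localtor} lies in $\mathfrak k$ for all $v,w\in\mathfrak g$. Since $\V\in\lcgk$ gives $\Ran(\V^2+1)\subset\mathfrak k$, I would first replace $\V^2[v,w]$ by $-[v,w]$ modulo $\mathfrak k$, so that \eqref{localtor} becomes the classical Nijenhuis expression
\[
T(v,w):=\V[v,\V w]+\V[\V v,w]-[\V v,\V w]+[v,w]\in\mathfrak k,\qquad v,w\in\mathfrak g.
\]
Because $T$ is $\mathbb R$-bilinear and $\mathfrak k^{\mathbb C}=\mathfrak k\oplus i\mathfrak k$, this condition is equivalent to its complex-bilinear extension $T^{\mathbb C}$ (the same formula with $\V^{\mathbb C}$ and the complexified bracket) taking values in $\mathfrak k^{\mathbb C}$ on all of $\mathfrak g^{\mathbb C}\times\mathfrak g^{\mathbb C}$.

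Next I would record that $Z_+$ and $Z_-$ together span $\mathfrak g^{\mathbb C}$: the operators $p_\pm=\tfrac12(1\mp i\V^{\mathbb C})$ satisfy $p_++p_-=\id$ and, using $\Ran((\V^{\mathbb C})^2+1)\subset\mathfrak k^{\mathbb C}$, one checks $(\V^{\mathbb C}\mp i)p_\pm=\mp\tfrac{i}{2}((\V^{\mathbb C})^2+1)$, so that $p_\pm v\in Z_\pm$ for every $v$; hence $\mathfrak g^{\mathbb C}=Z_++Z_-$ (with $Z_+\cap Z_-=\mathfrak k^{\mathbb C}$). By bilinearity it then suffices to evaluate $T^{\mathbb C}$ on the four combinations $Z_\pm\times Z_\pm$.

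The heart of the argument is a direct substitution $\V^{\mathbb C}v=iv+k_1$ and $\V^{\mathbb C}w=\pm iw+k_2$ (with $k_1,k_2\in\mathfrak k^{\mathbb C}$) into $T^{\mathbb C}(v,w)$, where the only nontrivial input is the commutation relation \eqref{complex-commutation}, namely $\V^{\mathbb C}[k,v]=[k,\V^{\mathbb C}v]$ for $k\in\mathfrak k^{\mathbb C}$, which lets me move $\V^{\mathbb C}$ across a bracket carrying a $\mathfrak k^{\mathbb C}$-factor at the cost of a term $[k_1,k_2]\in\mathfrak k^{\mathbb C}$. Carrying this out, the mixed block collapses to $T^{\mathbb C}(v,w)=[k_1,k_2]\in\mathfrak k^{\mathbb C}$ for $v\in Z_+,\,w\in Z_-$ (and likewise for $Z_-\times Z_+$ by the antisymmetry of $\Omega_{\J}$), so the off-diagonal blocks lie in $\mathfrak k^{\mathbb C}$ unconditionally. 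On the diagonal block one gets, modulo $\mathfrak k^{\mathbb C}$,
\[
T^{\mathbb C}(v,w)\equiv 2i\big(\V^{\mathbb C}-i\big)[v,w],\qquad v,w\in Z_+,
\]
together with its complex conjugate for $Z_-$. Thus $T^{\mathbb C}(v,w)\in\mathfrak k^{\mathbb C}$ for $v,w\in Z_+$ is equivalent to $(\V^{\mathbb C}-i)[v,w]\in\mathfrak k^{\mathbb C}$, i.e. to $[v,w]\in Z_+$. Assembling the four blocks, $T^{\mathbb C}$ is $\mathfrak k^{\mathbb C}$-valued everywhere if and only if $[Z_+,Z_+]\subset Z_+$ (the $Z_-$ condition being the conjugate of this), which is exactly the asserted equivalence.

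I expect the main obstacle to be precisely that $\mathfrak k^{\mathbb C}$ is not an ideal of $\mathfrak g^{\mathbb C}$, so one cannot pass to a quotient Lie algebra and must instead keep careful track of the $\mathfrak k^{\mathbb C}$-valued correction terms throughout the substitution; the relation \eqref{complex-commutation} is the one structural fact that keeps all these corrections inside $\mathfrak k^{\mathbb C}$, and verifying that the mixed block lands in $\mathfrak k^{\mathbb C}$ \emph{identically} (rather than only under an involutivity hypothesis) is the computation that must be done with care.
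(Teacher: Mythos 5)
Your proof is correct, and its skeleton coincides with the paper's: reduce via Theorem \ref{torsionJ} to condition \eqref{localtor} modulo $\mathfrak k$, trade $-\V^2[v,w]$ for $+[v,w]$ using $\Ran(\V^2+1)\subset\mathfrak k$, complexify, and substitute $\V^{\mathbb C}v=iv+k_1$ while tracking $\mathfrak k^{\mathbb C}$-valued corrections via \eqref{complex-commutation}; in particular your diagonal-block congruence $T^{\mathbb C}(v,w)\equiv 2i(\V^{\mathbb C}-i)[v,w] \pmod{\mathfrak k^{\mathbb C}}$ on $Z_+\times Z_+$ is exactly the paper's computation for the necessity direction. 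Where you genuinely diverge is in the sufficiency direction. The paper never decomposes $\mathfrak g^{\mathbb C}$: it takes \emph{real} $v,w$, feeds the pairs $(\V^{\mathbb C}+i)v,(\V^{\mathbb C}+i)w\in Z_+$ and $(\V^{\mathbb C}-i)v,(\V^{\mathbb C}-i)w\in Z_-$ into the hypothesis and its conjugate, adds the two resulting congruences so that cross terms cancel, and then applies $\V$ once more together with \eqref{range} to conclude $\beta(v,w)\in\mathfrak k$. You instead make the decomposition explicit --- proving $\mathfrak g^{\mathbb C}=Z_++Z_-$ with $Z_+\cap Z_-=\mathfrak k^{\mathbb C}$ via the projections $p_\pm=\tfrac12(1\mp i\V^{\mathbb C})$, a fact the paper records only in the split case (Remark \ref{remsupl}) but which your identity $(\V^{\mathbb C}\mp i)p_\pm=\mp\tfrac{i}{2}((\V^{\mathbb C})^2+1)$ establishes in general --- and then verify four blocks, including the unconditional computation that the mixed block collapses to $[k_1,k_2]\in\mathfrak k^{\mathbb C}$ (which I have checked; it is exact under \eqref{complex-commutation} and holds modulo $\mathfrak k^{\mathbb C}$ under the weaker relation). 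The two routes are close relatives, since $(\V^{\mathbb C}+i)v=2i\,p_+v$, so the paper is implicitly using your decomposition; but yours buys a cleaner logical structure: a single block identity delivers both implications at once, the mixed-block vanishing replaces the paper's somewhat ad hoc add-and-cancel step, and the spanning statement is proved in the general non-split setting. One caution: the paper states \eqref{complex-commutation} as an exact identity, whereas admissibility (cf.\ \eqref{derivcom}) only guarantees $\V^{\mathbb C}[k,v]-[k,\V^{\mathbb C}v]\in\mathfrak k^{\mathbb C}$; your bookkeeping, like the paper's, uses the relation only modulo $\mathfrak k^{\mathbb C}$ (all corrections are absorbed into terms such as $[k_1,k_2]$), so both arguments survive under the correct weaker hypothesis.
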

\begin{proof}
We use the formula \eqref{localtor} from Theorem \ref{torsionJ}, with the addition that for $v, w \in \mathfrak g$, 
\begin{equation}\label{range}
\V^2[v,w]+[v,w]\in\mathfrak k
\end{equation}
Let us define an anti-symmetric bilinear form $\beta$ on $\mathfrak g$ by  
\begin{equation}\label{localnj}
\beta(v,w):=\V[v,\V w]+\V[\V v,w]-[\V v,\V w] - \V^2[v,w].
\end{equation}
In this setting the vanishing of the torsion is therefore equivalent to  $\beta$  taking values in  $\mathfrak k$: 
$$\Omega_{\J}\equiv 0 \Longleftrightarrow \beta(v,w) \in \mathfrak k, \forall v,w\in \mathfrak g.$$
\begin{itemize}[leftmargin=*]
   \item Suppose that  $\Omega_{\J}$ vanishes. By complexifying the bilinear form $\beta$ defined by equation (\ref{localnj}) and using equation~\eqref{range}, we have for $v, w \in \mathfrak{g}^{\mathbb{C}}$
   \[
   \V^{\mathbb{C}}[v,\V^{\mathbb{C}} w]+\V^{\mathbb{C}} [\V^{\mathbb{C}} v,w]-[\V^{\mathbb{C}} v,\V^{\mathbb{C}} w]+[v,w]\in \mathfrak{k}^{\mathbb{C}}.
   \]
    Let us prove that if   $v,w\in Z_+$, then the bracket $[v, w]$ belongs to  $ Z_+$ as well.  For $v,w\in Z_+$, we have $\V^{\mathbb C}v=iv+k_1$ and $\V^{\mathbb C}w=iw+k_2$. Therefore 
\[
2i\V^{\mathbb C}[v,w]+\V^{\mathbb C}[v,k_2]+\V^{\mathbb C}[k_1,w]+2[v,w]-i[v,k_2]-i[k_1,w]\in\mathfrak{k}^{\mathbb{C}}.
\]
By equation~\eqref{complex-commutation}  we have
\[
\V^{\mathbb C}[v,k_2]=-\V^{\mathbb C}[k_2,v]=-[k_2,\V^{\mathbb C}v]=-[k_2,iv+k_1]=i[v,k_2]+k_3
\]
and likewise $\V^{\mathbb C}[k_1,w]=i[k_1,w]+k_4$, therefore $$2i\V^{\mathbb C}[v,w]=-2[v,w] +k_5,$$ which proves that $[v,w]\in Z_+$.

\item Now we prove the implication in reverse direction. Suppose that for $v,w\in Z_+$, the bracket $[v, w]$ belongs to $ Z_+$. Let us prove that $\beta$ takes values in~$\mathfrak k$. 

Let $v,w\in \mathfrak g$, then
\[
(\V^{\mathbb C}-i)(\V^{\mathbb C}+i)v=(\V^2+1)v\in \mathfrak k,
\]
therefore $(\V^{\mathbb C}+i)v\in Z_+$, and likewise $(\V^{\mathbb C}+i)w\in Z_+$. Then the hypothesis of the theorem tells us that
$$
\V^{\mathbb C} [(\V^{\mathbb C} +i)v,(\V^{\mathbb C} +i)w]=i[(\V^{\mathbb C} +i)v,(\V^{\mathbb C} +i)w]+k
$$
for some $k\in \mathfrak k^{\mathbb C}$. After expanding, we get that 
\begin{equation}\label{launo}
\V [\V v,\V w]+i\V [v,\V w]+i\V [\V v,w]-\V [v,w]
\end{equation}
equals to
\[
i[\V v,\V w]-[\V v,w]-[v,\V w]-i[v,w]+k.
\]
Note that by using the hypothesis, conjugating, using equation \eqref{conjbr} and the fact that $\overline{Z_+}=Z_-$, we also have $\V^{\mathbb C} [x,y]+i[x,y]\in\mathfrak k^{\mathbb C}$ for all  $x,y\in Z_-$. Since we also have $(\V^{\mathbb C} -i)v\in Z_{-}$ and  $(\V^{\mathbb C} -i)w\in Z_-$, with a similar reasoning we obtain that 
\begin{equation}\label{lados}
\V [\V v,\V w]-i\V [v,\V w]-i\V [\V v,w]-\V [v,w]
\end{equation}
equals to
\[
-i[\V v,\V w]-[\V v,w]-[v,\V w]+i[v,w]+k'.
\]
Adding equations \eqref{launo} and \eqref{lados} (and halving) and canceling out we arrive at
$$
[v,\V w]+[\V v,w]+\V [\V v,\V w]-\V [v,w]\in \mathfrak k.
$$
If we apply $\V$ we get
$$
\V [v,\V w]+\V [\V v,w]+\V^2[\V v,\V w]-\V^2[v,w]\in \mathfrak k.
$$
Finally using equation~\eqref{range}, 
 $\V^2[\V v,\V w]=-[\V v,\V w]+k_2$  for $k_2\in \mathfrak k$, hence $\beta(v,w)\in\mathfrak k$.
\end{itemize}
\end{proof}

Combining Definitions \ref{homs}, \ref{homj}, \ref{invs}, Theorem \ref{teob} and the Newlander--Nirenberg theorem in the Banach context (\cite[Theorem 7]{beltita05integrability}), we obtain the following:
\begin{corollary}\label{integrability}
Let $G/K$ be a homogeneous space equipped with a homogeneous almost complex structure $\J$ given by $\V\in\lcgk$. Then $\J$ is integrable (i.e. $G/K$ admits complex charts compatible with $\J$) if and only if it is Nijenhuis, i.e. if and only if
\[
Z_{+}=\{v\in \mathfrak g^{\mathbb C}: (\V^{\mathbb{C}} -  i)v\in 
 \mathfrak k^{\mathbb C}\},
\]
is a complex Lie subalgebra of $\mathfrak{g}^{\mathbb{C}}$.
%[v,w]\in Z_+$ for all  $v,w\in Z_+$.
\end{corollary}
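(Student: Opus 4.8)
The plan is to obtain the corollary by combining the algebraic reformulation of the Nijenhuis condition furnished by Theorem \ref{teob} with the analytic input of the Banach Newlander--Nirenberg theorem \cite[Theorem 7]{beltita05integrability}. First I would dispose of the purely algebraic side. By Definition \ref{invs}, $Z_+$ is already a closed complex linear subspace of $\mathfrak g^{\mathbb C}$, so to say that $Z_+$ is a complex Banach--Lie subalgebra is to say nothing more than that it is closed under the bracket, i.e. $[v,w]\in Z_+$ for all $v,w\in Z_+$. Theorem \ref{teob} identifies precisely this bracket-closedness with the Nijenhuis condition $\Omega_{\J}\equiv 0$ for the homogeneous almost complex structure $\J$ induced by $\V\in\lcgk$. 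Hence the whole content of the corollary reduces to the single equivalence that $\J$ is integrable if and only if $\J$ is Nijenhuis.

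For this equivalence I would invoke the real-analytic Banach Newlander--Nirenberg theorem \cite[Theorem 7]{beltita05integrability}. As Remark \ref{remnij} stresses (via Patyi's counterexample), this equivalence can fail for merely smooth structures, so the decisive point is to verify that $\J$ is in fact real-analytic. Here I would use that a Banach--Lie group $G$ is a real-analytic manifold, that $G/K$ carries a real-analytic structure for which $\pi$ is a real-analytic submersion and the action $\alpha$ is real-analytic, and that the base-point operator $\J_{p_0}$ is a fixed bounded linear map; since $\J_p=(\alpha_g)_*\,\J_{p_0}\,(\alpha_g)_*^{-1}$ for $p=\pi(g)$ by Definition \ref{def:homog_struct}, the bundle map $\J$ varies real-analytically over $G/K$. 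Moreover, because $\J^2=-1$, the $\pm i$-eigenbundles of $\J^{\mathbb C}$ split $T(G/K)^{\mathbb C}$ as a direct sum, so each is automatically complemented and the hypotheses of the Frobenius-type statement in \cite[Theorem 7]{beltita05integrability} are in force.

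With real-analyticity established, \cite[Theorem 7]{beltita05integrability} gives that $\J$ is integrable if and only if $\Omega_{\J}\equiv 0$, and chaining this with the reformulation of the first paragraph yields the desired equivalences: $\J$ is integrable if and only if $\J$ is Nijenhuis if and only if $Z_+$ is a complex Lie subalgebra of $\mathfrak g^{\mathbb C}$. A point I would make explicit along the way is that $Z_+$ is the preimage under $\pi_{*1}^{\mathbb C}:\mathfrak g^{\mathbb C}\to \mathfrak g^{\mathbb C}/\mathfrak k^{\mathbb C}$ of the $+i$-eigenspace of $\J_{p_0}^{\mathbb C}$ (and is \emph{not} an eigenspace of $\V^{\mathbb C}$ itself), so that closedness of $Z_+$ under the bracket is exactly the involutivity of the $+i$-eigendistribution entering the Newlander--Nirenberg/Frobenius argument.

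The hard part will be justifying the applicability of the Banach Newlander--Nirenberg theorem, that is, confirming that the induced homogeneous structure is genuinely real-analytic rather than only smooth, together with the real-analyticity of $G/K$ itself in the possibly non-split setting. This real-analyticity is essential and cannot be waived, precisely because the smooth case is obstructed by Patyi's example recalled in Remark \ref{remnij}.
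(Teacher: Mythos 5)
Your proposal follows essentially the same route as the paper, which obtains the corollary by combining Theorem \ref{teob} (the equivalence of $\Omega_{\J}\equiv 0$ with bracket-closedness of the already-closed subspace $Z_+$) with the real-analytic Banach Newlander--Nirenberg theorem \cite[Theorem 7]{beltita05integrability}. Your additional verifications---real-analyticity of the homogeneous structure via $\J_p=(\alpha_g)_*\J_{p_0}(\alpha_g)_*^{-1}$, the splitting of $T(G/K)^{\mathbb C}$ by the $\pm i$-eigenprojections $\tfrac12(1\mp i\J^{\mathbb C})$, and the identification of $Z_+$ as $(\pi_{*1}^{\mathbb C})^{-1}$ of the $+i$-eigenspace of $\J_{p_0}^{\mathbb C}$---are all correct and usefully make explicit what the paper leaves implicit in its one-line derivation.
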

\medskip

\begin{remark} By conjugation, 
$Z_+$ is a complex Lie subalgebra of $\mathfrak g^{\mathbb{C}}$ if and only if $Z_-$ is.
\end{remark}

\begin{remark}\label{remsupl}
If $K$ is split in $G$, i.e. if $\mathfrak k$ has a closed complement $\mathfrak m$ in $\mathfrak g$, $\mathfrak g=\mathfrak k\oplus \mathfrak m$,  one can consider $\V \in \mathcal{I}_{\mathfrak m}(G, K)$ (see Remark~\ref{more_general_complex}). In this case the spaces $Z_{\pm}$ defined in Definition~\ref{invs}  are given by 
$$
Z_{\pm} = \mathfrak k^{\mathbb{C}} \oplus \operatorname{Eig_{\pm i}}(\V^{\mathbb{C}}_{|\mathfrak m^{\mathbb{C}}}),
$$
where $\operatorname{Eig_{\pm i}}(\V^{\mathbb{C}}_{|\mathfrak m^{\mathbb{C}}})$ is the eigenspace with eigenvalue $\pm i$ of the complex linear extension $\V^{\mathbb{C}}$ restricted to $\mathfrak m^{\mathbb{C}}$.
Note that in this case, we have
\begin{itemize}
    \item $\mathfrak g^{\mathbb C} = Z_+ + Z_-$
    \item $Z_+\cap Z_- = \mathfrak k^{\mathbb C}$
    \item $\Ad_{\mathfrak k} Z_\pm \subset Z_\pm$.
\end{itemize}
By Theorem~15 in \cite{beltita05integrability} (see also \cite{frolicher55}), in this complemented case, any homogeneous complex structure on $M = G/K$ comes from this kind of decomposition of $\mathfrak g^{\mathbb C}$. 
In particular, in the complemented case, Corollary~\ref{integrability} reduces to Theorem~13 in \cite{beltita05integrability}.
\end{remark}

\medskip

\section{Examples}

\begin{example} We now return to our example of the sphere $\mathbb{S}^2\simeq SO(3)/SO(2)$ presented in Subsection \ref{thesphere}. Let $k_0,e_1,e_2\in \mathfrak{so}(3)$ be as in  Subsection \ref{thesphere}, and note that from the definition $\I k_0 = 0$, $\I e_1 = -e_2$, $\I e_2=e_1$ it follows that $\Ran(\I^2+1)=\{0\}\subset\mathfrak k$ therefore $\I$ induces an almost complex structure $\N$ on $\mathbb{S}^2\simeq SO(3)/SO(2)$. By Remark \ref{remsupl}, the vanishing of its torsion is equivalent to
\[
Z_+=\{E_1+iE_2\}
\]
being a Lie subalgebra of $\mathfrak{so}(3)$, which is trivial because it is (complex) one-dimensional. Therefore this particular complex structure is integrable (that the sphere $\mathbb{S}^2$ is a complex manifold is of course well-known).  It is known that the real sphere $\mathbb{S}^n$ admits an almost complex structure if and only if $n=2$ or $n=6$ (see \cite{borel} or the survey \cite{konstantis18} for further details). 
The known almost complex structure on $\mathbb{S}^6$ is also homogeneous and can be constructed by considering $\mathbb{S}^6$ inside the subspace of purely imaginary octonions; however this almost complex structure is not integrable (see \cite{konstantis18}). For the infinite dimensional sphere $\mathcal S$ (the unit sphere of a real Hilbert space $\H$), it is known that $\mathcal S$ is real analytic isomorphic to $\H$, see \cite{dobrowolski95}, therefore $\mathcal S$ admits an almost complex structure, being a complex manifold (an infinite dimensional real Hilbert space $\H$ is also a complex Hilbert space, halving the basis).

Going back to $\mathbb{S}^2$, consider the following linear operator
\[\I k_0 = \alpha k_0,\qquad \I e_1 = \beta e_2, \qquad \I e_2=\gamma e_1\]
for  $\alpha,\beta,\gamma\in\R$. It is admissible, i.e. $\I\in\lgk$, if and only if $\gamma=-\beta$ and then it gives rise to a homogeneous vector bundle map $\N$ on $\mathbb{S}^2$. The condition \eqref{localtor} also holds in this case, which means that $\N$ is a Nijenhuis operator on $\mathbb{S}^2$. Additionally $\I\in\lcgk$ only if $\beta=\pm 1$, in which case we obtain again an integrable homogeneous complex structure, which is up to sign equal to the previously described one.
\end{example}

Let us now consider an example of an infinite-dimensional homogeneous space, where we will introduce a vector bundle map and prove it is not a Nijenhuis operator by means of Theorem \ref{torsionJ}.

\begin{example}\label{ex:compact}
Let $\H$ be an infinite dimensional Hilbert space, denote with $\mathcal B(\H)$ the bounded linear operators acting in $\H$, with $\mathcal K(\H)$ the ideal of compact operators, and consider the group of invertible operators $G=GL(\H)\subset \mathcal B(\H)$. For the Lie subgroup $K$ consider the group of invertible operators which differ from the identity by a compact operator $K=GL(\H)\cap(\Id+\mathcal K(\H))$. Note that since $\mathcal K(\H)$ is a closed subspace of $\mathcal B(\H)$, then $K$ is an immersed subgroup of $G$ (moreover, it is embedded since the topology of $K$ is the norm topology). But $K$ is not split in $G$, since the compact operators are not complemented in the bounded operators. Now, since compact operators are a closed ideal in the algebra of bounded operators, the group $K$ is a normal subgroup of $G$, therefore the quotient has a structure of Banach--Lie group, which makes of the quotient map $\pi:G\to G/K$ a smooth submersion (see \cite[Theorem II.2]{glockner-neeb03}).

Consider the linear functional  $\ell$ on $S=\mathbb C \Id +\mathcal K(\H)$ defined as $\ell(\mathcal K(\H))=0$ and $\ell(\Id)=1$. 
Since 
\[
\|t \Id + k \|=|t| \|\Id +k'\| \ge |t| d =|\ell(t\Id + k)|\,  d
\]
where $d=dist(\Id,\mathcal K(\H))=\inf_{k\in \mathcal K(\H)} \|k+\Id\|>0$, 
it follows that $\ell$ is bounded in $S=\mathbb C \Id +\mathcal K(\H)$. By means of the Hahn--Banach theorem one extends it to a bounded functional on the whole $\mathcal B(\H)$, also denoted by $\ell$. Now consider $\I\in\mathcal B(\mathcal B(\H))$ given by
\[ \I(X) = \ell(X) \cdot \Id. \]
Let us verify that indeed $\I\in\lgk$.
By definition it vanishes on $\mathcal K(\H)$, so it preserves it in a trivial manner. The other condition is equivalent to  
\begin{equation}\label{commute} 
\Ad_{\Id+k}\I(X) - \I(\Ad_{\Id+k}X)\in \mathcal K(\H)
\end{equation}
for all $k\in \mathcal K(\H)$ such that $\Id+k\in GL(\H)$, $X\in\mathcal B(\H)$. Since  $\I(X)$ lies in the center of $\mathcal B(\H)$, condition~\eqref{commute}   can be written as 
\[\I(X) - \I((\Id+k)X(\Id+\tilde k))\in\mathcal K(\H),\]
where $\Id+\tilde k = (\Id+k)^{-1}$ with $\tilde{k}$ compact. Since $kX$, $X\tilde k$ and $k X\tilde k$ are all compact and hence in the kernel of $\I$, the identity holds.

Thus $\I$ defined in this way gives rise to a homogeneous vector bundle map $\N$ on $G/K$. However by Theorem~\ref{torsionJ} it is never a Nijenhuis operator. Namely in the condition \eqref{localtor} the first three terms vanish identically since the image of $\I$ lies inside the center of the Lie algebra $\mathcal B(\H)$. 
Thus the condition for $\N$ to be Nijenhuis is
\[ \I^2([v,w]) \in  \mathcal K(\H) \qquad \textnormal{for all } v,w\in\mathcal B(\H).\]
Note that  by definition $\I$ is idempotent and  never takes value in $\mathcal K(\H)\setminus\{0\}$. Thus for $\N$ to be Nijenhuis the following identity should be satisfied
\begin{equation}\label{tor-hilbert}\I([v,w]) = 0 \qquad \textnormal{for all } v,w\in\mathcal B(\H).\end{equation}
It was demonstrated in \cite{halmos-commutators} that every operator in $\mathcal B(\H)$ is the sum of four commutators, thus the linear span of all commutators is equal to the whole $\mathcal B(\H)$. Thus the condition \eqref{tor-hilbert} never holds, as it would imply $\I=0$.

Let us also mention that vector bundle maps $\I$ of the discussed form never give rise to an almost complex structure since they are idempotent $\I^2=\I$.
\end{example}

\begin{example}
Consider $G$ and $K$ as in previous example. Let's look for another simple case of the operator $\I$: namely right and left multiplication by bounded operators:
\[ \I(X)= AXB, \]
for $A,B,X\in \mathcal B(\H)$. In this case the condition $N\mathcal K(\H)\subset \mathcal K(\H)$ is automatically satisfied since $\mathcal K(\H)$ is a two-sided ideal in $\mathcal B(H)$.

The other condition for $\I$ to be admissible (see Remark~\ref{connected}) is
\[
[k,AXB] - A[k,X]B \in \mathcal K(\H),
\]
for $k \in \mathcal K(\H)$.
It is also automatically satisfied for the same reason. Thus the operator $\I$ descends to the operator $\N$ on the homogeneous space. If we choose $A$ and $B$ such that $A^2=B^2=-1$ we get an almost complex structure on $G/K$.

Let us verify using Theorem~\ref{torsionJ} if it is a Nijenhuis operator. The condition \eqref{localtor} doesn't hold in general, however in a simpler case when either $A=\Id$ or $B=\Id$ it is always satisfied. Thus left (or right) multiplication by a bounded operator from $\mathcal B(\H)$ gives always rise to a Nijenhuis operator on $G/K$. If we choose this operator in a such way that it's square is $-1$, we obtain an integrable complex structure on $G/K$.
\end{example}

\begin{example}
Let us mention here another well-known example. Consider a separable infinite-dimensional complex Hilbert space $\H$ endowed with the orthogonal decomposition 
\[ \H = \H_-\oplus \H_+ \]
onto two infinite-dimensional closed subspaces.
Denote by $P_\pm$ an orthogonal projection onto $\H_\pm$ and by $d=i(P_+-P_-)$.

Consider the Banach--Lie group $G=U_{\textnormal{res}}$ defined as follows (see e.g. \cite{segal}):
\[
U_{\textnormal{res}} = \{U\in \mathcal B(\H)\;|\; U^*U=UU^*=\Id, [U,d]\in L^2(\H)\},
\]
where $L^2(\H)$ is the ideal of Hilbert--Schmidt operators. Its Banach--Lie algebra is 
\[
\mathfrak g = \mathfrak u_{\textnormal{res}} = \{u\in \mathcal B(\H)\;|\; u^*=-u, [u,d]\in L^2(\H)\}.
\]
One verifies readily that $d\in\mathfrak u_{\textnormal{res}}$.

The group $U_{\textnormal{res}}$ acts on the Hilbert space $\H$ in the natural way and in consequence it also acts on the Grassmannian of $\H$, i.e. the set of all closed subspaces of $\H$. The action on the Grassmannian is not transitive. The orbit of the closed infinite-dimensional subspace $\H_+$ is known as the \emph{restricted Grassmannian} $Gr_{\textnormal{res}}$. The stabilizer of $\H_+$ is a product of two unitary groups $K=\mathcal U(\H_+)\times\mathcal U(\H_-)$.
The restricted Grassmannian $Gr_{\textnormal{res}}$ possess a manifold structure and $\pi$ is a submersion. It is thus a homogeneous space $G/K = U_{\textnormal{res}}/(\mathcal U(\H_+)\times\mathcal U(\H_-))$.

We can define  Nijenhuis operators on $Gr_{\textnormal{res}}$ by considering  bounded operators on the Banach--Lie algebra $\mathfrak u_{\textnormal{res}}$ of the form $\I = \ad_{\tilde d}$, where $\tilde d$ belongs to the center of $\mathfrak k = \mathfrak u(\H_+)\times\mathfrak u(\H_-)$. It is easy to check that $\ad_{\tilde d}\in\lgk$ and thus it descends to a vector bundle map $\N$ on $TGr_{\textnormal{res}}$.
Let us employ Theorem \ref{torsionJ} to verify that the torsion of $\N$ vanishes.  By Corollary \ref{cor:ad_nij}, $\N$ is a Nijenhuis operator if and only if
\[ \big[ [\tilde d, v], [\tilde d, w] \big] \in \mathfrak u(\H_+)\times\mathfrak u(\H_-) \]
for all $v,w\in \mathfrak u_{\textnormal{res}}$.
This can be checked by direct computation, and is equivalent to the fact that the restricted Grassmannian is a (locally) symmetric space.

The restricted Grassmannian $Gr_{\textnormal{res}}$ is a K\"ahler manifold, which means among others that it possesses a complex structure. It is induced by $\I =\ad_{\tilde d}$  with  $\tilde d = \frac12 d = \frac12 i (P_+ - P_-)$. Direct computation shows that indeed $\Ran((\ad_{\tilde d})^2 + 1)\subset\mathfrak u(\H_+)\times\mathfrak u(\H_-)$ thus we  obtain an almost complex structure. Previous considerations prove that it is indeed integrable. Let us note that the restricted Grassmannian is also a symplectic leaf in a certain Banach Lie--Poisson space (central extension of the predual space of $\mathfrak u_{\textnormal{res}}$, see \cite{Ratiu-grass})  and it leads to a hierarchy of integrable systems on it, see \cite{GO-grass,GT-momentum}.
\end{example}
\begin{remark}
In finite dimensions there is a well known method of obtaining an almost complex structure on coadjoint orbits of Lie groups, see e.g. \cite[Section~1.2, Theorem 2]{marsden-chernoff},\cite[Part V-Section 12.2]{cannas2001} and \cite{weinstein1977lectures}. It goes by considering a polar decomposition of the $\ad_d$ operator. In the paper \cite{GLT-kahler} the generalization of this approach will be applied to the study of unitary orbits of trace-class operators, in the spirit of Kirillov's orbit method \cite{kirillov-lec}. The results of the present paper will be used to address the question of integrability of these structures.   
\end{remark}

%\backmatter

\bigskip

%\bmhead{Acknowledgements}
\section*{Acknowledgements}
This research was partially supported by ANPCyT, CONICET, UBACyT  20020220400256BA (Argentina) and 2020 National Science Centre, Poland/Fonds zur Förderung der wissenschaftlichen Forschung, Austria grant ``Banach Poisson--Lie groups and integrable systems'' 2020/01/Y/ST1/00123, I-5015N.

% \bibliography{../literatura,bib}
% \renewcommand*{\bibfont}{\normalfont\small}
% \printbibliography

\end{document}